\newtheorem{thm}{Theorem}[section]
\newtheorem{cor}[thm]{Corollary}
\newtheorem{lem}[thm]{Lemma}
\newtheorem{ques}[thm]{Question}
\theoremstyle{definition}
\newtheorem{defn}[thm]{Definition}
\theoremstyle{remark}
\numberwithin{equation}{section}
\newcommand{\spin}{\ifmmode{\rm Spin}\else{${\rm spin}$\ }\fi}
\newcommand{\spinc}{\ifmmode{{\rm Spin}^c}\else{${\rm spin}^c$}\fi}
\newcommand{\Z}{\mathbb{Z}}
\DeclareMathOperator{\tors}{tors}
\newcommand{\plumb}{\entrymodifiers={+[o][F-]} \xymatrix@C=8pt}
\newcommand{\el}{\ar@{-}[r]}
\newcommand{\ed}{\ar@{..}[r] }
\begin{document}

\title*{A survey on embeddings of 3-manifolds in definite 4-manifolds}
\author{Paolo Aceto, Duncan McCoy, and JungHwan Park}
\institute{Paolo Aceto \at Université de Lille, \email{paoloaceto@gmail.com}
\and Duncan McCoy \at Universit\'{e} du Qu\'{e}bec \`{a} Montr\'{e}al \email{mc\_coy.duncan@uqam.ca}
\and JungHwan Park \at Korea Advanced Institute of Science and Technology \email{jungpark0817@kaist.ac.kr}
}
%
%
\maketitle

\abstract*{}

\abstract{
This article presents a survey on the topic of embedding 3-manifolds in definite 4-manifolds, emphasizing the latest progress in the field. We will focus on the significant role played by Donaldson's diagonalization theorem and the combinatorics of integral lattices in understanding these embeddings. Additionally, the article introduces a new result concerning the embedding of amphichiral lens spaces in negative-definite manifolds.}

\section{Introduction}
Every closed 3-manifold smoothly embeds in $S^5$~\cite{Hirsch:1961-1, Rohlin:1965-1, Wall:1965-1}, but the question of codimension 1 embedding is much more delicate. In this survey article, we present recent progress on known results about the embedding problem. We divide it into two parts: cases where the 4-manifold is $S^4$ and cases involving definite manifolds. Although there are results on the embedding of 3-manifolds into indefinite manifolds (see, e.g., \cite{Kawauchi:1988-1, Aceto-Golla-Larson:2017-1}), we focus exclusively on the definite case.

Most of the results included in this article demonstrate that embedding is not possible in certain cases. To obtain such results, many cutting-edge technologies have been applied over the years, such as Casson-Gordon signatures~\cite{Gilmer-Livingston:1983-1}, the G-index theorem~\cite{Crisp-Hillman:1998-1}, and various gauge-theoretic techniques (see~\cite[Section 2]{Budney-Burton:2008-1} for a comprehensive survey). Here, we focus on the significant role played by Donaldson's diagonalization theorem~\cite{Donaldson:1987-1} and the combinatorics of integral lattices in understanding these embeddings. Even though the article focuses on the smooth embedding of 3-manifolds in 4-manifolds, we point out that there are many interesting results about topological embeddings as well (see e.g.\ \cite{Edmonds:2005-1, Edmonds-Livingston:1996-1}). Additionally, the article introduces a new result that provides constructions for embedding amphichiral lens spaces in negative-definite manifolds, which we will now state.

Given coprime integers $p$ and $q$ with $p > q > 0$, let $L(p,q)$ be an \emph{amphichiral} lens space; that is, $L(p,q)$ and $-L(p,q)$, the manifold $L(p,q)$ with reversed orientation, are orientation-preservingly diffeomorphic. It turns out that amphichiral lens spaces always have small negative-definite fillings, implying that they can be embedded in small, closed, smooth, negative-definite 4-manifolds.


\begin{thm}\label{thm:amhichiralfilling}
Every amphichiral lens space admits a smooth, negative-definite filling $W$ with $b_1(W) = 0$ and $b_2(W) = 2$. In particular, every amphichiral lens space smoothly embeds in a closed, smooth, negative-definite manifold $X$ such that $b_1(X) = 0$ and $b_2(X) = 4$.\end{thm}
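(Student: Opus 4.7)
The plan is to first construct the filling $W$ with the stated properties and then derive the existence of the closed manifold $X$ by doubling $W$ along the amphichiral involution; I describe the doubling step first since it is the easier of the two.

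Assuming $W$ has been constructed, amphichirality provides an orientation-reversing diffeomorphism $\tau\colon L(p,q) \to L(p,q)$. Set $X := W \cup_\tau W$. A Mayer--Vietoris computation using $H_1(W) = 0$ (which will hold because $W$ will be a $2$-handlebody on a link in $S^3$) and $H_1(L(p,q)) = \Z/p$ gives $H_1(X) = 0$. Euler characteristic additivity yields $\chi(X) = 2\chi(W) = 6$, so $b_2(X) = 4$; and Novikov's signature additivity over the rational homology sphere $\partial W$ gives $\sigma(X) = 2\sigma(W) = -4$, so $X$ is negative-definite.

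The heart of the argument is the construction of $W$. Recall that $L(p,q)$ is amphichiral if and only if $q^2 \equiv -1 \pmod p$; write $q^2 + 1 = mp$. I would realize $W$ as a $2$-handlebody obtained from $B^4$ by attaching two $2$-handles along a framed two-component link $K_1 \cup K_2 \subset S^3$, with linking matrix of the form $\begin{pmatrix} -a & b \\ b & -c \end{pmatrix}$, $a, c > 0$, $ac - b^2 = p$. Such a matrix is negative-definite with $|H_1(\partial W)| = p$, forcing $b_1(W) = 0$ and $b_2(W) = 2$ automatically. The amphichirality hypothesis implies that $p$ is a sum of two squares, and the Gaussian-integer factorization $q + i = \pi\rho$ with $N(\pi) = p$, $N(\rho) = m$ furnishes candidate values $(a,b,c)$ whose discriminant form agrees with the linking form of $L(p,q)$.

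The principal difficulty will be choosing $K_1$ and $K_2$ themselves (not merely the linking matrix) so that Dehn surgery on $K_1 \cup K_2$ yields $L(p,q)$ orientation-preservingly, rather than some other lens space with the same linking form. For instance, the plumbing of $[-2,-7]$ realizes the intersection form $\begin{pmatrix} -2 & 1 \\ 1 & -7 \end{pmatrix}$ of determinant $13$, but its boundary is $L(13,6)$, not the amphichiral $L(13,5)$. My plan is to replace the Hopf-linked pair of unknots by a symmetric $2$-bridge link tailored to the amphichiral involution, with linking number and framings dictated by the $\Z[i]$-factorization, and then to verify $\partial W \cong L(p,q)$ by a direct Kirby calculus computation. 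Separating $L(p,q)$ from its neighbors sharing the same linking form is expected to be the main technical hurdle, and may well require a separate construction depending on the factorization structure of $m$.
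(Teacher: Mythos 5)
Your doubling step is fine and is essentially how one passes from the filling to the closed manifold: glue two copies of $W$ along an orientation-reversing self-diffeomorphism of $L(p,q)$ and apply Mayer--Vietoris together with Novikov additivity. (The paper's $W$ only satisfies $b_1(W)=0$ rather than $H_1(W)=0$, but the rational Mayer--Vietoris sequence still yields $b_1(X)=0$, which is all the theorem asserts.)

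The construction of $W$ itself is where the entire content of the theorem lies, and your proposal does not carry it out. The Gaussian-integer and discriminant-form analysis only constrains the \emph{linking matrix} of the hypothetical two-component link; it provides no mechanism for producing an actual link $K_1\cup K_2\subset S^3$ on which the prescribed surgery yields $L(p,q)$ with the correct orientation, and you explicitly flag this as an unresolved ``main technical hurdle.'' This is not a routine verification: as your own determinant-$13$ example illustrates, a given negative-definite binary form of determinant $p$ is typically realized geometrically by a lens space \emph{other} than the amphichiral one, and there is no general procedure for re-knotting a framed link so as to change the resulting $3$-manifold while keeping the linking matrix. You are also implicitly aiming at a strictly stronger statement than the theorem, namely a simply connected filling with $H_1(W)=0$ and $\lvert\det Q_W\rvert=p$; it is not clear such a filling always exists, and the paper avoids this rigidity. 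Its route is entirely different: it first characterizes amphichiral lens spaces combinatorially (Lemmas~\ref{lem:complementarycriteria}, \ref{lem:selfcomplementary} and \ref{amphichiralstring}), showing that the continued-fraction string of an amphichiral $L(p,q)\ncong L(2,1)$ has the form $(a_1,\dots,a_n+1,b_1,\dots,b_m)$ or $(a_1,\dots,a_n,b_1+1,\dots,b_m)$ for a complementary pair of strings; it then builds an explicit cobordism from $L(2,1)$ to $L(p,q)$ with one $1$-handle and two $2$-handles, identifying the new boundary via the identity $[a_1,\dots,a_n,1,b_1,\dots,b_m]^-=0$ for complementary strings, and finally caps off with the disk bundle $X(2,1)$. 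The resulting $W$ has $b_1=0$ and $b_2=2$ but is not a handlebody built from two $2$-handles on a link in $S^3$. To salvage your approach you would need, at a minimum, an existence proof of the required link for every amphichiral $L(p,q)$, and that is the whole theorem.
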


This theorem should be compared to \cite[Theorem 1.5]{Aceto-McCoy-Park:2022-1}, which implies that for any positive integer $k$, there are infinitely many lens spaces that do not allow smooth embedding into any closed, negative-definite manifold $X$ with $b_2(X) \leq k$. Moreover, the theorem also provides an upper bound on the smooth 4-genus for amphichiral 2-bridge knots as a corollary:

\begin{cor}\label{cor:amphichiralgenus}
The smooth 4-dimensional genus of an amphichiral 2-bridge knot is at most one.
\end{cor}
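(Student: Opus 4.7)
The plan is to deduce the corollary from Theorem \ref{thm:amhichiralfilling} using the classical double-branched-cover correspondence for 2-bridge knots. Recall that for a 2-bridge knot $K = K(p,q)$, the double branched cover of $S^3$ over $K$ is the lens space $L(p,q)$, and $K$ is amphichiral exactly when $L(p,q)$ is amphichiral (which corresponds to $q^2 \equiv -1 \pmod p$). Hence, for any amphichiral 2-bridge knot $K$, Theorem \ref{thm:amhichiralfilling} supplies a smooth, negative-definite filling $W$ of $L(p,q)$ with $b_1(W) = 0$ and $b_2(W) = 2$.

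The bridge between fillings and 4-genus bounds is the standard observation that if $K$ bounds a smoothly embedded, connected, orientable surface $F \subset B^4$ of genus $g$, then the double cover $V$ of $B^4$ branched along $F$ satisfies $\partial V = L(p,q)$, $b_1(V) = 0$, and $b_2(V) = 2g$, the last equality following from $\chi(V) = 2\chi(B^4) - \chi(F) = 1 + 2g$. To conclude that the smooth 4-genus of $K$ is at most one, it therefore suffices to realize the filling $W$ from Theorem \ref{thm:amhichiralfilling} as the double cover of $B^4$ branched along a genus one surface $F$ with $\partial F = K$.

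The strategy is to produce $W$ equivariantly: present $F$ in $B^4$ as a disk with two bands attached (the standard model for a genus one orientable surface with one boundary component) and verify that the double cover of $B^4$ branched along $F$ agrees with the filling produced by Theorem \ref{thm:amhichiralfilling}. The two bands correspond to a pair of 2-handles attached to $B^4$ along a link in $S^3$ determined by the bands, and one must identify this handle description with the one arising in the proof of Theorem \ref{thm:amhichiralfilling}. The main obstacle is precisely this identification: the expected input is the palindromic continued fraction expansion of $p/q$ for amphichiral $L(p,q)$, which should naturally yield an equivariant handle decomposition of $W$ compatible with the strong inversion on $K(p,q)$, after which orientability of the quotient surface and the count $b_2(W)/2 = 1$ together deliver the desired bound.
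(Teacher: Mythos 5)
Your overall strategy is the same as the paper's: both arguments descend a small filling of $L(p,q)$ through the branched double cover correspondence to a genus-one surface in $B^4$ bounded by $K(p,q)$. The problem is that you have only outlined this descent, and the step you yourself flag as ``the main obstacle'' --- actually exhibiting the two bands, i.e.\ showing that the filling admits a handle decomposition equivariant with respect to an involution covering the hyperelliptic involution of $(S^3,K(p,q))$, with connected, orientable quotient surface having the right boundary --- is precisely the content of the proof and is not supplied. It does not follow formally from $b_2(W)=2$ that $W$ is a branched double cover of $B^4$ at all, let alone along a genus-one surface bounded by $K$; the computation $b_2(V)=2g$ only runs from the surface to the cover. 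Note also that for the genus bound one does not actually need the filling or its definiteness: what is needed is just the surface, and the branched cover is only the bookkeeping device certifying the construction.

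The paper resolves this by construction rather than by identification after the fact. In the proof of Theorem~\ref{thm:amhichiralfilling}, the cobordism from $L(2,1)$ to $L(p,q)$ is built from one $1$-handle and two $2$-handles attached along curves that are visibly symmetric under the standard involution (Figure~\ref{fig:third}); hence it is the double cover of $S^3\times[0,1]$ branched along a surface $\Sigma$ with one local minimum and two saddles and $\partial\Sigma=-K(2,1)\sqcup K(p,q)$. Gluing in the annulus $A\subset B^4$ bounded by the Hopf link $K(2,1)$ then yields a properly embedded surface $A\cup\Sigma\subset B^4$ with $\chi=-1$ and one boundary component, hence genus one --- after checking orientability by choosing the orientation of the Hopf link suitably, a point your sketch names but does not verify. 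To complete your version you would have to carry out essentially this construction: produce the two bands explicitly from the complementary-strings decomposition of Lemma~\ref{amphichiralstring}, not merely assert that the palindromic continued fraction ``should naturally yield'' them.
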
	

\noindent Note that this corollary provides a complete computation of the 4-genus for amphichiral 2-bridge knots, as Lisca classified smoothly slice 2-bridge knots~\cite[Theorem 1.2]{Lisca:2007-1}.



\subsection*{Notation and conventions}
In this paper, every 3-manifold is smooth, connected, closed, and oriented. All 4-manifolds are smooth, connected, compact, and oriented; furthermore, all maps between manifolds are smooth. We denote by $-Y$ the manifold $Y$ with its orientation reversed. For two manifolds $Y_1$ and $Y_2$, the symbol $Y_1 \cong Y_2$  is used to indicate that there is an orientation-preserving diffeomorphism between them.

\subsection*{Acknowledgements}
JP is partially supported by Samsung Science and Technology Foundation (SSTF-BA2102-02) and the POSCO TJ Park Science Fellowship.

\section{Preliminaries}

\subsection{Lens spaces}
We define the \emph{lens space} $L(p,q)$ to be the result of $-p/q$-Dehn surgery on the unknot in $S^3$ where $p$ and $q$ are coprime integers with $p > q > 0$. Note that $L(p,q)$ is a rational homology sphere with $H_1(L(p,q);\Z) \cong \Z/p\Z$. Each lens space $L(p,q)$ serves as the boundary of the \emph{canonical negative-definite plumbing} $X(p,q)$. This plumbing is a smooth, negative-definite 4-manifold obtained by plumbing disk-bundles over spheres according to a \emph{linear plumbing graph} $\Gamma$ of the form:

\[
\begin{tikzpicture}[xscale=1.0, yscale=1, baseline={(0,0)}]
    \foreach \x/\label in {1/-a_1,2/-a_2,3/-a_3,5/-a_{n}} {
        \node at (\x-0.1, .4) {$\label$};
    }
    
    \node at (1, 0) (A1) {$\bullet$};
    \node at (2, 0) (A2) {$\bullet$};
    \node at (3, 0) (A3) {$\bullet$};
    \node at (4, 0) (A4) {$\cdots$};
    \node at (5, 0) (An) {$\bullet$};
    
    \draw (A1) -- (A2) -- (A3) -- (A4) -- (An);
\end{tikzpicture}
\]
where the weights $a_i$ satisfy $a_i\geq 2$ and are uniquely determined by the continued fraction expansion
$$\frac{p}{q} = a_1 - \cfrac{1}{a_2 - \cfrac{1}{\ddots - \cfrac{1}{a_n}}}$$
of $p/q$. In this case, we write $p/q = [a_1, \dots, a_n]^-$ to indicate that we are working with negative continued fractions.

Additionally, a connected sum of lens spaces $\#_i L(p_i,q_i)$, is the boundary of the boundary connected sum of canonical negative-definite plumbings $\natural_i X(p_i,q_i)$. The plumbing graph associated with this construction is the disjoint union of the linear plumbing graphs for each of the summands.

For each lens space $L(p,q)$, there exists a unique 2-bridge link, denoted $K(p,q)$, such that its double-branched cover, $\Sigma_2(K(p,q))$, is diffeomorphic to $L(p,q)$~\cite{Hodgson-Rubinstein:1985-1}. It should be noted that a 2-bridge link $K(p,q)$ is either a knot or it has two components. Furthermore, $K(p,q)$ is a knot if and only if $p$ is odd.

\subsection{Seifert fibered spaces}
Let $e$ be an integer and $(p_1,q_1),(p_2,q_2),\ldots, (p_n,q_n)$ be pairs of coprime integers. The \emph{Seifert fibered space} over $S^2$, $$Y= S^2(e,(p_1,q_1),(p_2,q_2),\ldots, (p_n,q_n)),$$ has the following surgery presentation:

$$\begin{tikzpicture}
\begin{knot}[clip width=4, clip radius=2pt,ignore endpoint intersections=false,]
\draw[color=black] (0.5,-1) node {$\frac{p_1}{q_1}$}; 
\draw[color=black] (1.5,-1) node {$\frac{p_2}{q_2}$}; 
\draw[color=black] (2.5,-.6) node {$\cdots$}; 
\draw[color=black] (3.5,-1) node {$\frac{p_n}{q_n}$}; 
\draw[color=black] (2,2.4) node {$e$}; 
\strand[thick] (0,0)--(4,0) to [out=right,in=down] (5,1)to [out=up,in=right] (4,2)--(0,2) to [out=left,in=up] (-1,1) to [out=down,in=left] (0,0);
\strand[thick] (0.5,0) ellipse (0.2cm and 0.5cm);
\strand[thick] (1.5,0) ellipse (0.2cm and 0.5cm);
\strand[thick] (3.5,0) ellipse (0.2cm and 0.5cm);
\flipcrossings{2,4,6,8}
\end{knot}\end{tikzpicture}$$
The \emph{generalized Euler invariant} of $Y$ is defined as $$\varepsilon(Y) := e - \sum_{i=1}^{n} \frac{q_i}{p_i}.$$

\noindent After possibly changing the orientation of $Y$, we may assume that $Y$ is in \emph{standard form}, that is, $\varepsilon(Y) \leq 0$ and $p_i/q_i < -1$. With these conventions, the Poincar\'{e} homology sphere that bounds a negative-definite manifold is $S^2(-2,(2,-1),(3,-2), (5,-4))$. If $Y$ is in standard form, then it bounds a plumbed negative semidefinite 4-manifold prescribed by the following plumbing graph $\Gamma$:


\[
\begin{tikzpicture}[xscale=1.2, yscale=1.2, baseline={(0,0)}]

    \foreach \x/\label in {1/ a^1_1,2/a^1_2,3/a^1_3,5/a^1_{h_1}} {
        \node at (\x, 1.25) {$\label$};
    }

\foreach \x/\label in {1/ a^2_1,2/a^2_2,3/a^2_3,5/a^2_{h_2}} {
        \node at (\x, .4) {$\label$};}

\foreach \x/\label in {1/ a^n_1,2/a^n_2,3/a^n_3,5/a^n_{h_n}} {
        \node at (\x, -2.1) {$\label$};}

    \node at (-.4, 0) {$e$};
    
    \node at (0, 0) (A0) {$\bullet$};
    \node at (1, 0) (A1) {$\bullet$};
    \node at (2, 0) (A2) {$\bullet$};
    \node at (3, 0) (A3) {$\bullet$};
    \node at (4, 0) (A4) {$\cdots$};
    \node at (5, 0) (An) {$\bullet$};

    \node at (1, .85) (B1) {$\bullet$};
    \node at (2, .85) (B2) {$\bullet$};
    \node at (3, .85) (B3) {$\bullet$};
    \node at (4, .85) (B4) {$\cdots$};
    \node at (5, .85) (Bn) {$\bullet$};
        
    \node at (1, -1.7) (D1) {$\bullet$};
    \node at (2, -1.7) (D2) {$\bullet$};
    \node at (3, -1.7) (D3) {$\bullet$};
    \node at (4, -1.7) (D4) {$\cdots$};
    \node at (5, -1.7) (Dn) {$\bullet$};

    \node at (3, -0.75) {$\vdots$};
    
    \draw (A0) -- (B1);
    \draw (A0) -- (D1);
    \draw (A0) -- (A1) -- (A2) -- (A3) -- (A4) -- (An);
     \draw  (B1) -- (B2) -- (B3) -- (B4) -- (Bn);
     \draw  (D1) -- (D2) -- (D3) -- (D4) -- (Dn);
\end{tikzpicture}
\]
where we use the unique continued fraction expansion $p_i/q_i = [a^i_1, \dots, a^i_{h_i}]^-$  for each $i$, such that $a^i_j \leq -2$. The smooth negative semidefinite plumbed  4-manifold corresponding to $\Gamma$ is denoted by $X_\Gamma$.


Brieskorn manifolds introduced by Brieskorn~\cite{Brieskorn:1966-1, Brieskorn:1966-2} are Seifert fibered spaces. Let $p, q,$ and $r$ be pairwise relatively prime positive integers. Then the \emph{Brieskorn homology sphere} $\Sigma(p,q,r)$ is defined as the link of the following singularity:

$$\Sigma(p,q,r) := \{ x^p+y^q+z^r=0 \} \cap S^5 \subset \mathbb{C}^3.$$
The Poincar\'{e} homology sphere mentioned above is $\Sigma(2,3,5)$, and more generally each $\Sigma(p,q,r)$ is diffeomorphic to $S^2(e,(p,q_1),(q,q_2),(r,q_3))$ for some $q_1, q_2, q_3$, and $e$~\cite{Neumann-Raymond:seifert-manifolds}.

\subsection{Integral lattices and map of lattices}
An \emph{integral lattice} is a pair $(L, Q_L)$, where $L$ is a free abelian group, and $Q_L \colon L \times L \rightarrow \mathbb{Z}$ is a symmetric bilinear pairing. A \emph{map of lattices} is a group homomorphism
$$\phi: (L, Q_L) \rightarrow (L', Q_{L'})$$
that preserves the bilinear pairings. In particular, the intersection form of a compact oriented 4-manifold $X$ defines an integral lattice $(H_2(X)/ \tors, Q_X)$. We will often refer to this lattice simply as $Q_X$.

\begin{defn}\label{def:QGamma}
Let $\Gamma$ be a disjoint union of linear plumbing graphs, and let $V(\Gamma)$ be the set of its vertices. We define the \emph{lattice associated to $\Gamma$} as the lattice
$$(\mathbb{Z} V(\Gamma), Q_{\Gamma}),$$
where the group is the free abelian group generated by the vertices of $\Gamma$, and the pairing $Q_\Gamma$ is defined by the following rule: for all vertices $u, v \in V(\Gamma)$,
\[
Q_{\Gamma}(u,v)= \begin{cases}
w(v) & \text{if $u = v$},\\
-1 & \text{if $u, v$ are adjacent},\\
0 & \text{otherwise},
\end{cases}
\]
where $w(v)$ denotes the weight of $v$.
For brevity, we often refer to $(\mathbb{Z} V(\Gamma), Q_{\Gamma})$ simply as $Q_\Gamma$.
\end{defn}

Note that if $\Gamma$ is the linear plumbing graph of $X(p,q)$, or more generally of the boundary of the boundary connected sum of canonical negative-definite plumbings $\natural_i X(p_i,q_i)$, then $(\mathbb{Z} V(\Gamma), Q{\Gamma})$ is the integral lattice $\left(H_2(\natural_i X(p_i,q_i))/ \tors, Q_{\natural_i X(p_i,q_i)}\right)$, and the same is true for the Seifert fibered spaces.

\section{Embeddings in $S^4$}
\subsection{Embedding sums of lens spaces in $S^4$}
Hantzsche~\cite{Hantzsche:1938-1} used the Mayer-Vietoris sequence and duality to show that if a rational homology 3-sphere $Y$ embeds in $S^4$, then
$$H_1(Y;\Z)\cong G \oplus G,$$
where $G$ is a finite abelian group. In particular, we see that no lens space embeds in $S^4$.

Let $L(p,q)$ be a lens space with coprime integers $p$ and $q$ with $p>q>0$. If we consider a punctured lens space $L(p,q)_0$, the situation is quite different. Zeeman~\cite{Zeeman:1965-1} proved that for any knot $K$ in $S^3$, we can obtain a smooth 2-knot in $S^4$ by performing an operation called \emph{1-twist spinning} of $K$, denoted by $S_1(K)$. Roughly, this construction is achieved by spinning a knotted arc obtained by removing a small segment from the given $K$ along the $\mathbb{R}^2$-axis and performing a full rotation during the spinning process. Zeeman proved that $S_1(K)$ is smoothly unknotted for any choice of $K$. In particular, the knot $K \mathbin{\#} -K$ is a section of an smoothly unknotted 2-knot $S_1(K)$, and its double-branched cover $\Sigma_2(K \mathbin{\#} -K) \cong \Sigma_2(K) \mathbin{\#} -\Sigma(K)$ smoothly embeds in $S^4$. In particular, when $L(p,q)$ is the double-branched cover of a 2-bridge knot, that is, when $p$ is odd, we have that $L(p,q) \mathbin{\#} - L(p,q)$ smoothly embeds in $S^4$. It is important to note that the existence of an embedding of $Y \mathbin{\#} -Y$ is equivalent to the existence of an embedding of a punctured $Y_0$ in $S^4$. This equivalence is because the tubular neighborhood of $Y_0$ is $Y_0 \times I$, and its boundary is $Y \mathbin{\#} -Y$. Epstein~\cite{Epstein:1965-1} used a homotopy theoretical argument to prove that if $p$ is even, then $L(p,q)_0$ never smoothly embeds in $S^4$. In summary, we have the following:

\begin{thm}[{\cite{Zeeman:1965-1,Epstein:1965-1}}]\label{Thm:puncturedlensspaceinS4}
Let $L(p,q)$ be a lens space and $L(p,q)_0$ be a 3-manifold obtained by puncturing $L(p,q)$. Then the following are equivalent:
\begin{enumerate}[label=(\roman*), font=\upshape]
\item $p$ is odd;
\item $L(p,q) \mathbin{\#} - L(p,q)$ smoothly embeds in $S^4$;
\item $L(p,q)_0$ smoothly embeds in $S^4$.
\end{enumerate}
\end{thm}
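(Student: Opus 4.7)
The plan is to establish three separate implications that together give all of the equivalences: $(i) \Rightarrow (ii)$ via Zeeman's twist-spin construction, the elementary equivalence $(ii) \Leftrightarrow (iii)$, and $(iii) \Rightarrow (i)$ via Epstein's homotopy-theoretic obstruction. The constructive directions use well-known invariance results, while the final implication is the delicate one.

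For $(i) \Rightarrow (ii)$, I would assume $p$ is odd so that $K := K(p,q)$ is a genuine 2-bridge knot with $\Sigma_2(S^3, K) \cong L(p,q)$. Invoking Zeeman's theorem, the 1-twist spin $S_1(K) \subset S^4$ is smoothly unknotted, so the double cover of $S^4$ branched along $S_1(K)$ is again $S^4$. Arranging the construction so that $K \mathbin{\#} -K$ appears as the cross-section of $S_1(K)$ with an equatorial $S^3 \subset S^4$, the preimage of this $S^3$ under the branched covering is $\Sigma_2(S^3, K \mathbin{\#} -K) \cong L(p,q) \mathbin{\#} -L(p,q)$, yielding the desired smooth embedding in $S^4$.

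For $(ii) \Leftrightarrow (iii)$, the decomposition $L(p,q) \mathbin{\#} -L(p,q) = L(p,q)_0 \cup_{S^2} -L(p,q)_0$ realizes $L(p,q)_0$ as a codimension-zero submanifold of $L(p,q) \mathbin{\#} -L(p,q)$, giving $(ii) \Rightarrow (iii)$ directly. Conversely, a smooth embedding of $L(p,q)_0$ in $S^4$ has trivial normal line bundle by orientability, so a closed tubular neighborhood is diffeomorphic to $L(p,q)_0 \times [-1,1]$; after smoothing corners along $S^2 \times [-1,1]$, its boundary is $L(p,q) \mathbin{\#} -L(p,q)$, which therefore embeds as well.

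The main obstacle is $(iii) \Rightarrow (i)$, which is Epstein's theorem. Assuming for contradiction that $L(p,q)_0$ smoothly embeds in $S^4$ with $p$ even, the embedding separates $S^4$ into two 4-dimensional regions which, after capping off the boundary 2-sphere by 3-balls, yield two closed complementary pieces. I would then follow Epstein's homotopy-theoretic analysis of the induced maps between $L(p,q)$ and these complementary pieces, combining Poincar\'e–Lefschetz duality with an invariant sensitive to 2-torsion of $H_1(L(p,q);\Z) \cong \Z/p\Z$ (for instance, a Bockstein or cup-product computation against the $\Z/2$ linking form) to derive a contradiction. Pinning down the precise algebraic-topological invariant that distinguishes even from odd $p$ is where I would expect to spend the most effort, since the parity of $p$ is not detected by the more elementary Hantzsche-type obstruction (which is already satisfied by $L(p,q) \mathbin{\#} -L(p,q)$).
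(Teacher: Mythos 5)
Your proposal matches the paper's treatment: the paper establishes (i)$\Rightarrow$(ii) via Zeeman's 1-twist spin of the 2-bridge knot $K(p,q)$ and the double branched cover of the unknotted $S_1(K)$, obtains (ii)$\Leftrightarrow$(iii) from the tubular neighborhood $L(p,q)_0\times I$ whose boundary is the double $L(p,q)\mathbin{\#}-L(p,q)$, and simply cites Epstein for (iii)$\Rightarrow$(i), exactly as you do. The argument is correct and essentially identical in structure and level of detail.
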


Naturally, one is led to understanding embeddings of general connected sums of lens spaces in $S^4$. The first progress was obtained by Kawauchi and Kojima~\cite{Kawauchi-Kojima:1980-1} using the linking form of the lens spaces. They proved:
\begin{thm}[{\cite[Proposition 6.1]{Kawauchi-Kojima:1980-1}}] Let $L(p,q)$ and $L(p,q')$ be lens spaces. If $L(p,q) \mathbin{\#} -L(p,q')$ smoothly embeds in $S^4$, then $L(p,q)$ and $L(p,q')$ are homotopy equivalent.
\end{thm}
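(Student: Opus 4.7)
The plan is to exploit the fact that an embedding in $S^4$ forces the linking form of $Y := L(p,q) \mathbin{\#} -L(p,q')$ to be hyperbolic, and then to translate this into the classical number-theoretic criterion for homotopy equivalence of lens spaces. First, an embedding $Y\hookrightarrow S^4$ separates $S^4$ into two compact oriented 4-manifolds $W$ and $W'$ with $\partial W=Y=-\partial W'$. The Mayer--Vietoris computation underlying Hantzsche's theorem shows that $W$ and $W'$ are both rational homology 4-balls; in particular $H_1(Y)\cong G\oplus G$, which combined with $H_1(Y)\cong \Z/p\oplus \Z/p'$ forces $p=p'$.

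Next I would invoke the standard duality fact that whenever a rational homology sphere $Y$ bounds a rational homology 4-ball $W$, the subgroup $M:=\ker\bigl(H_1(Y)\to H_1(W)\bigr)$ is a \emph{metabolizer} for the linking form $\lambda_Y\colon H_1(Y)\times H_1(Y)\to \Q/\Z$: one has $|M|^2=|H_1(Y)|$ and $\lambda_Y$ vanishes identically on $M\times M$. This follows from Poincar\'e--Lefschetz duality applied to the long exact sequence of $(W,Y)$ with $\Q/\Z$ coefficients. Using the standard facts that the linking form of a connected sum is the orthogonal sum of the summands' linking forms and that $\lambda_{-Y}=-\lambda_Y$, one obtains $H_1(Y)=\Z/p\langle g\rangle\oplus \Z/p\langle h\rangle$ with $\lambda_Y(g,g)=q/p$, $\lambda_Y(h,h)=-q'/p$ and $\lambda_Y(g,h)=0$. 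A metabolizer of order $p$ is generated by some pair $(a,b)$ with $\gcd(a,b,p)=1$, and self-annihilation becomes the congruence
\[
qa^2 \equiv q'b^2 \pmod{p}.
\]

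The final step is purely number-theoretic: I would argue, working prime by prime via the Chinese Remainder Theorem when $p$ is not a prime power, that the existence of such a pair $(a,b)$ with $\gcd(a,b,p)=1$ forces $\pm qq'$ to be a square modulo $p$. By the classical Franz--Whitehead homotopy classification of lens spaces, this condition is equivalent to $L(p,q)\simeq L(p,q')$. I expect the main obstacle to be precisely this last step when $p$ is not squarefree: at each prime power $p_i^{k_i}\mid p$ one must verify that a metabolizer of the correct order for the localized form $\lambda_q\perp(-\lambda_{q'})$ on $(\Z/p_i^{k_i})^{\oplus 2}$ still produces $\pm qq'$ as a square in $(\Z/p_i^{k_i})^\times$. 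This requires a careful case analysis on the $p_i$-adic valuations of $a$ and $b$; the delicate cases are those in which neither $a$ nor $b$ is a unit modulo $p_i$, since then the naive cancellation argument breaks and one must use the condition $\gcd(a,b,p)=1$ together with the coprimality of $q,q'$ to $p$ to extract the desired square.
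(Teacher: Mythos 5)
The paper itself gives no proof of this statement---it is quoted from Kawauchi--Kojima with only the remark that their argument uses linking forms---so your overall strategy is at least consistent with the attributed method. However, your proof has a genuine gap at the step where you assert that a metabolizer of order $p$ in $H_1(Y)\cong \Z/p\oplus\Z/p$ ``is generated by some pair $(a,b)$ with $\gcd(a,b,p)=1$.'' This is false when $p$ is not squarefree, and the failure is fatal rather than a delicate case to be handled: for $p=25$, $q=1$, $q'=2$, the non-cyclic subgroup $M=5\Z/25\oplus 5\Z/25$ has order $25$ and is self-annihilating for the form $\langle 1/25\rangle\oplus\langle -2/25\rangle$ (every pairing of its elements lands in $\Z$), yet $L(25,1)$ and $L(25,2)$ are not homotopy equivalent since neither $2$ nor $-2$ is a square mod $25$. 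Since all you extract from the embedding is the existence of a single metabolizer---you only use one of the two complementary pieces $W$, $W'$---your hypothesis is strictly weaker than the desired conclusion, and no amount of care in the final number-theoretic step can close the gap.

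The missing idea is to use both sides of the embedding simultaneously. Mayer--Vietoris for $S^4=W\cup_Y W'$ shows that $(i_*,i'_*)\colon H_1(Y)\to H_1(W)\oplus H_1(W')$ is an isomorphism, hence the two metabolizers $M=\ker i_*$ and $M'=\ker i'_*$ satisfy $H_1(Y)=M\oplus M'$ with $|M|=|M'|=p$. Comparing $\ell$-ranks of both sides for each prime $\ell\mid p$ forces $M\cong\Z/p$ to be cyclic, so its generator $(a,b)$ genuinely satisfies $\gcd(a,b,p)=1$. At each prime $\ell\mid p$ at least one of $a,b$ is then a unit modulo $\ell^{v_\ell(p)}$, and your congruence $qa^2\equiv q'b^2$ shows $qq'$ is a square modulo $\ell^{v_\ell(p)}$, hence modulo $p$ by the Chinese Remainder Theorem; Whitehead's classification finishes the argument. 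Note that once $\gcd(a,b,p)=1$ is secured, the case you flagged as the main obstacle (neither $a$ nor $b$ a unit modulo $\ell$) cannot occur, so the real difficulty lies upstream, in establishing the cyclicity of the metabolizer, not in the final congruence manipulation.
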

 
Using the Casson-Gordon invariant of finite cyclic covers of 3-manifolds~\cite{Casson-Gordon:1986-1, Casson-Gordon:1978-1, Gilmer:1981-1, Gordon:1978-1}, Gilmer and Livingston~\cite{Gilmer-Livingston:1983-1} made further progress as follows. Recall that two rational homology 3-spheres $Y_1$ and $Y_2$ are \emph{homology cobordant} if there exists a smooth compact oriented 4-manifold $W$ with boundary $\partial W = Y_1 \mathbin{\cup} -Y_2$, such that the inclusions $Y_i \hookrightarrow W$ induce isomorphisms $H_*(Y_i; \mathbb{Z}) \rightarrow H_*(W; \mathbb{Z})$ for $i=1,2$.

\begin{thm}[{\cite[Theorem 3.4]{Gilmer-Livingston:1983-1}}] Let $L(p,q)$ and $L(p,q')$ be lens spaces. If $L(p,q) \mathbin{\#} -L(p,q')$ smoothly embeds in $S^4$, then $L(p,q)$ and $L(p,q')$ are homology cobordant. Moreover, if $p$ is a prime power or less than $231$, then $L(p,q)$ and $L(p,q')$ are diffeomorphic.
\end{thm}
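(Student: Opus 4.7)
The plan is to (a) extract a rational homology 4-ball bounded by $L(p,q) \# -L(p,q')$ from the embedding, (b) cut this ball along the connect-sum sphere to manufacture an integral homology cobordism from $L(p,q)$ to $L(p,q')$, and (c) apply Casson--Gordon invariants to rigidify this cobordism to a diffeomorphism when $p$ is a prime power or $p<231$.

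For step (a), the embedding decomposes $S^4 = W_1 \cup_{L(p,q) \# -L(p,q')} W_2$. A Mayer--Vietoris calculation shows each $W_i$ is a rational homology ball, and Alexander duality together with the identity $|H_1(W_1)|\cdot|H_1(W_2)| = |H_1(L(p,q)\# -L(p,q'))| = p^2$ forces $H_1(W_i;\mathbb{Z})\cong \mathbb{Z}/p$. Hence the kernel $K$ of the surjection $H_1(\partial W_1) = \mathbb{Z}/p\oplus\mathbb{Z}/p \twoheadrightarrow H_1(W_1)$ is an order-$p$ metabolizer for the linking form $\lambda_{L(p,q)}\oplus-\lambda_{L(p,q')}$. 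Nondegeneracy of the individual linking forms rules out $K$ being contained in either $\mathbb{Z}/p$ factor, so $K$ is a \emph{graph} subgroup, i.e.\ it projects isomorphically onto each factor.

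For step (b), attach a 4-dimensional 3-handle to $W_1$ along the connect-sum 2-sphere $S\subset \partial W_1$. Since $H_2(\partial W_1;\mathbb{Z})=0$, the attaching sphere is null-homologous, and a Mayer--Vietoris computation yields a 4-manifold $W$ with $\partial W = L(p,q)\sqcup -L(p,q')$ and integral homology $(H_0,H_1,H_2,H_3)=(\mathbb{Z},\mathbb{Z}/p,0,\mathbb{Z})$. The graph-projection property of $K$ from step (a) is precisely what ensures that each inclusion $L(p,q)\hookrightarrow W$ and $L(p,q')\hookrightarrow W$ induces an isomorphism on $H_1$; combined with the obvious isomorphisms on $H_0$ and $H_3$, this promotes $W$ to an integral homology cobordism in the sense of the excerpt.

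For step (c), I would apply Casson--Gordon $\tau$-invariants to the cyclic $p$-fold cover of $W$ classified by the map $H_1(W;\mathbb{Z}) \twoheadrightarrow \mathbb{Z}/p$. This cover restricts on each boundary component to the cyclic $p$-fold cover of the corresponding lens space, so for every character $\chi$ of $\mathbb{Z}/p$ the invariants $\tau(L(p,q),\chi)$ and $\tau(L(p,q'),\chi)$ must agree. For lens spaces these $\tau$-invariants unpack into explicit Dedekind-sum-type expressions; when $p$ is a prime power the Galois structure of $\mathbb{Q}(\zeta_p)$ makes these identities rigid enough to force $q'\equiv \pm q^{\pm 1}\pmod p$, which in combination with the homotopy equivalence supplied by Kawauchi--Kojima yields $L(p,q)\cong L(p,q')$. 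The range $p<231$ is handled by a finite case analysis using the same arithmetic obstructions. The main obstacle is step (c): extracting strong enough consequences from the equality of Casson--Gordon invariants to upgrade ``homology cobordism'' to ``diffeomorphism'', and pinning down exactly why the prime-power hypothesis makes the relevant Dedekind-sum identities decisive, is the technical core of Gilmer--Livingston's proof.
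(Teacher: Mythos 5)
The survey states this theorem without proof; it is quoted from Gilmer--Livingston with only the remark that the argument uses Casson--Gordon invariants of finite cyclic covers, so there is no in-paper proof to compare against. Judged on its own, your steps (a) and (b) follow the standard (and correct) route to the first half of the statement: Mayer--Vietoris gives $H_1(Y)\cong H_1(W_1)\oplus H_1(W_2)$ and Alexander duality gives $H_1(W_1)\cong H_1(W_2)$, whence $H_1(W_i)\cong \mathbb{Z}/p$, and attaching a 3-handle along the connect-sum sphere produces a cobordism with the homology of $L(p,q)\times I$. One inference is too quick: for composite $p$, ``$K$ is not contained in either $\mathbb{Z}/p$ factor'' does not imply that $K$ is the graph of an isomorphism. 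For instance $2\mathbb{Z}/4\oplus 2\mathbb{Z}/4$ is self-annihilating for $\lambda\oplus-\lambda'$ and lies in neither factor, and the non-graph kernel of $(x,y)\mapsto 2x+y$ on $(\mathbb{Z}/4)^2$ has cyclic quotient; you need to use \emph{both} that $K$ is self-annihilating \emph{and} that $H_1(Y)/K\cong H_1(W_1)\cong\mathbb{Z}/p$ is cyclic to force both projections $K\to\mathbb{Z}/p$ to be isomorphisms, which is what makes the inclusions of the two boundary lens spaces induce isomorphisms on $H_1$.

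The genuine gap is step (c), which is the entire ``moreover'' clause and the actual content of Gilmer--Livingston's Theorem 3.4. Your proposal correctly identifies the tool (Casson--Gordon invariants of the $\mathbb{Z}/p$-cover of the cobordism, evaluated against the explicit lens-space formulas) but then defers the decisive computation: nothing in the write-up explains why equality of these invariants forces $q'\equiv\pm q^{\pm1}\pmod p$ when $p$ is a prime power, nor how the finite check for $p<231$ is carried out, and you acknowledge as much. Since the homology-cobordism conclusion is the easy half, the proposal as written establishes only that half of the theorem; the diffeomorphism statement remains an appeal to the original source rather than a proof.
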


Fintushel and Stern \cite{Fintushel-Stern:1987-1} reinterpreted the Casson-Gordon invariants via Yang-Mills theory~\cite{Donaldson:1983-1, Donaldson:1986-1, Fintushel-Stern:1984-1, Fintushel-Stern:1985-1, Fintushel-Stern:1987-2}. As a consequence, they proved the following:

\begin{thm}[{\cite[Theorem 6.3]{Fintushel-Stern:1987-1}}] If $L(p,q)$ and $L(p,q')$ are lens spaces with $p$ odd, then they are homology cobordant if and only if they are diffeomorphic. In particular, if $L(p,q) \mathbin{\#} -L(p,q')$ smoothly embeds in $S^4$, then $L(p,q)$ and $L(p,q')$ are diffeomorphic.
\end{thm}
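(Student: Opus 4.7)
The plan is to upgrade the Casson-Gordon homology cobordism obstructions appearing in the previous theorem to instanton-theoretic invariants on 4-orbifolds, where the oddness of $p$ becomes a crucial hypothesis. The forward direction ``diffeomorphic implies homology cobordant'' is trivial via the product cobordism, so suppose $W$ is a smooth homology cobordism from $L(p,q)$ to $L(p,q')$, with $\partial W = L(p,q)\sqcup -L(p,q')$ and the inclusions inducing isomorphisms on integral homology; I want to deduce that $L(p,q) \cong L(p,q')$.

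First I would cap off $W$ to build an orbifold on which Yang-Mills theory can be run. Each $L(p,q)$ with $p$ odd bounds a canonical 4-orbifold $C(p,q)$, namely the mapping cone on its Seifert $S^1$-action, whose underlying space is a rational ball with a single cone point of order $p$. Gluing $C(p,q)$ and $\overline{C(p,q')}$ to the two ends of $W$ produces a closed, oriented 4-orbifold $\widehat W$ with exactly two cone points, and since $W$ is a $\Z$-homology cobordism, $\widehat W$ is an orbifold rational homology $4$-sphere. I would then run Fintushel-Stern's orbifold Yang-Mills machinery on $\widehat W$: for an orbifold $SO(3)$-bundle $E \to \widehat W$ of prescribed fractional Pontryagin number, study the moduli space $\mathcal{M}(E)$ of anti-self-dual orbifold connections. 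The orbifold index theorem gives the virtual dimension, and the hypothesis that $p$ is odd enters twice---it makes the dimension formula integral, and it pins down the reducible locus to a finite, explicitly identifiable set of $U(1)$-reductions supported at the two cone points. A signed count of the resulting zero-dimensional moduli space, after choosing a generic orbifold metric, yields a rational invariant that depends only on the oriented homology-cobordism class of $L(p,q)$ together with its Seifert data, not on the choice of $W$.

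Comparing this invariant against the calibration provided by Theorem \ref{Thm:puncturedlensspaceinS4}---which exhibits $L(p,q) \mathbin{\#} -L(p,q)$ smoothly embedded in $S^4$ for every odd $p$, hence supplying the reference value---forces the Seifert invariants at the two cone points to match, and hence $q' \equiv q^{\pm 1}\pmod{p}$, which by the Reidemeister-Whitehead classification is exactly $L(p,q) \cong L(p,q')$. For the embedding corollary, if $L(p,q) \mathbin{\#} -L(p,q')$ embeds smoothly in $S^4$, then the separating 3-manifold splits $S^4$ into two pieces, and the connected-sum 2-sphere bounds a 3-ball on each side; performing surgery along this sphere inside one of the pieces produces a smooth homology cobordism between $L(p,q)$ and $L(p,q')$, reducing the claim to the first part.

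The main obstacle is the orbifold gauge theory itself: establishing transversality, Uhlenbeck compactness, and gluing for anti-self-dual connections in the orbifold category, and verifying that the reducible locus is controlled precisely by the odd-$p$ hypothesis so that the signed count is both well-defined and genuinely a homology-cobordism invariant. Once those foundations are in place, the combinatorial identification of the invariant with the Seifert data and the final passage to Reidemeister-Whitehead are comparatively direct.
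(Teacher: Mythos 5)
First, note that the survey does not prove this statement: it is quoted from Fintushel--Stern, with the only indication of method being that they ``reinterpreted the Casson--Gordon invariants via Yang--Mills theory.'' Your overall plan --- cap a homology cobordism $W$ from $L(p,q)$ to $L(p,q')$ with orbifold cones and run orbifold $SO(3)$ Yang--Mills on the resulting closed orbifold --- is indeed the Fintushel--Stern strategy in outline. But the decisive step of your argument is missing. The ``calibration against the reference value supplied by $L(p,q)\#-L(p,q)\subset S^4$'' is not a mechanism: you never explain what the signed count on $\widehat W$ actually computes, why it would see the rotation data at each cone point separately, or why equality of two numbers would force $q'\equiv q^{\pm1}\pmod p$. (Indeed, an invariant that ``depends only on the homology cobordism class together with its Seifert data'' is close to circular as a tool for proving the Seifert data is a homology cobordism invariant.) In Fintushel--Stern the work is done by the index theorem, which expresses the dimension of the ASD moduli space in terms of the $\rho$-invariants (equivalently the Casson--Gordon signatures $\sigma(p,q,i)$) of the two ends; the structure of the compactified moduli space and its isolated reducible points forces these signature functions to agree for all $i$; and then a separate number-theoretic lemma --- absent from your sketch --- shows that for $p$ odd the collection $\{\sigma(p,q,i)\}_i$ determines $q$ up to $q\mapsto q^{-1}\bmod p$, whence diffeomorphism by the classification of lens spaces. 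Without that endgame you cannot conclude, even granting all the gauge theory. Two smaller points: the oddness of $p$ is not needed to make the dimension formula integral --- it is what controls the reducible connections (the $2$-torsion in cohomology) and what makes the number theory work; and the natural cap is the cone $D^4/(\Z/p\Z)$ with a single singular point, whereas the mapping cylinder of the Seifert fibration would introduce two cone points per end.

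The reduction of the embedding clause also contains an error. The connect-sum sphere $S\subset L(p,q)\#-L(p,q')$ need \emph{not} bound a smooth $3$-ball on either side; that is a smooth Schoenflies-type assertion one cannot make. The standard argument is: by Mayer--Vietoris each complementary piece $X$ of the embedding is a rational homology ball; attach a four-dimensional $3$-handle to $X$ along $S$ (which is framed, being separating in $\partial X$), so that the new boundary is the result of surgery on $S$, namely $L(p,q)\sqcup -L(p,q')$; a second Mayer--Vietoris computation shows the resulting $4$-manifold is a homology cobordism. In the survey this reduction is in any case already recorded as the Gilmer--Livingston theorem stated immediately before, so the ``in particular'' clause follows by citing that result rather than re-deriving it. Finally, you yourself defer the transversality, compactness, and reducibility analysis; since that is exactly where this theorem lives, what you have is a plausible plan rather than a proof.
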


The most general statement was obtained quite recently by Donald~\cite{Donald:2015-1}. He provided a complete characterization of connected sums of lens spaces that can be embedded in $S^4$.

\begin{thm}[{\cite[Theorem 1.1]{Donald:2015-1}}]\label{thm:donald} Let $\{L(p_i,q_i)\}_{1\leq i \leq n}$ be a collection of lens spaces. If $L = \#_{i=1}^n L(p_i,q_i)$, then $L$ smoothly embeds in $S^4$ if and only if each $p_i$ is odd and there exists a closed 3-manifold $Y$ such that $L$ is diffeomorphic to $Y \mathbin{\#} -Y$.
\end{thm}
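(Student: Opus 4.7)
The plan is to treat the two directions separately, reserving Donaldson-theoretic input for the more demanding converse.

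For the ``if'' direction, suppose $L \cong Y \mathbin{\#} -Y$ with each $p_i$ odd. The Kneser--Milnor prime decomposition of $L$ consists of the summands $L(p_i,q_i)$ themselves (no $S^1 \times S^2$ factors appear, since $H_1(L;\mathbb{Z})$ is finite). Uniqueness of prime decomposition matches this multiset with the multiset of prime summands of $Y \mathbin{\#} -Y$, which is symmetric under orientation reversal. Thus $Y$ is itself a connected sum of lens spaces, and the summands of $L$ can be grouped into pairs $L(p,q) \mathbin{\#} L(p,p-q) \cong L(p,q) \mathbin{\#} -L(p,q)$ with $p$ odd. Each such pair embeds in $S^4$ by Theorem~\ref{Thm:puncturedlensspaceinS4}, and taking the connected sum of embeddings (joining them along a small separating $3$-ball in $S^4$) produces an embedding of $L$.

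For the ``only if'' direction, let $L \subset S^4$. Since $H_3(S^4) = 0$, $L$ separates as $S^4 = W_+ \cup_L W_-$; take $\partial W_+ = L$ and $\partial W_- = -L$. A Mayer--Vietoris calculation, using that $L$ is a rational homology sphere, shows that both $W_\pm$ are rational homology $4$-balls. Let $X_L = \natural_i X(p_i,q_i)$ and $X_{-L} = \natural_i X(p_i,p_i-q_i)$ denote the canonical negative-definite plumbings bounded by $L$ and $-L$. Gluing yields closed oriented $4$-manifolds
\[
Z_- = X_L \cup W_-, \qquad Z_+ = X_{-L} \cup W_+,
\]
both negative-definite, since the $W_\pm$ contribute nothing rationally to the intersection form. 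Donaldson's diagonalization theorem then yields lattice embeddings $Q_{X_L} \hookrightarrow \langle -1\rangle^{n_-}$ and $Q_{X_{-L}} \hookrightarrow \langle -1\rangle^{n_+}$, induced by the inclusions into $Z_-$ and $Z_+$ respectively.

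The heart of the proof, and the main obstacle, is the combinatorial analysis of these two simultaneous lattice embeddings. In a diagonal lattice, the plumbing chain for a summand $L(p_i,q_i)$ is represented by a sequence of vectors $v_1,\dots,v_{h_i}$ with $v_k \cdot v_k = -a^i_k$, $v_k \cdot v_{k+1} = -1$, and all other products zero. The goal is to classify the admissible embeddings of such chains and to show that the compatibility between the $X_L$-embedding and the $X_{-L}$-embedding, both constrained by the fact that the rational-homology-ball complements have trivial rational intersection form, forces two conclusions: (i) every $p_i$ is odd, and (ii) the linear chains pair up so that each $L(p_i,q_i)$ is matched with some $L(p_j,q_j) \cong -L(p_i,q_i)$. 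Setting $Y$ to be the connected sum of one lens space chosen from each such pair then yields $L \cong Y \mathbin{\#} -Y$. The central difficulty is in the lattice classification itself: plumbing chains admit many \emph{a priori} embeddings in diagonal lattices, and ruling out the ``bad'' ones requires inductive arguments on continued-fraction length together with careful bookkeeping of how basis vectors can be shared between distinct chains.
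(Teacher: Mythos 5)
Your ``if'' direction is correct and is essentially what the paper relies on: by uniqueness of prime decomposition the summands of $L \cong Y \mathbin{\#} -Y$ pair up as $L(p,q) \mathbin{\#} -L(p,q)$ with $p$ odd, each pair embeds by Theorem~\ref{Thm:puncturedlensspaceinS4}, and connected sums of separating embeddings in $S^4$ are again embeddings in $S^4$. The setup of your ``only if'' direction -- splitting $S^4$ into two rational homology balls $W_\pm$, gluing in the canonical plumbings, and applying Donaldson to obtain lattice embeddings $Q_{X_L} \hookrightarrow \langle -1\rangle^{n_-}$ and $Q_{X_{-L}} \hookrightarrow \langle -1\rangle^{n_+}$ -- also matches the paper's outline.

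The gap is in what you feed into the combinatorics. You say the two lattice embeddings are ``constrained by the fact that the rational-homology-ball complements have trivial rational intersection form,'' but that constraint, applied to each piece separately, only reproduces Lisca's condition for $L$ and for $-L$, which by Lisca's theorem is equivalent to $L$ bounding a rational homology ball. That is strictly weaker than embedding in $S^4$: for instance $L(9,2)\mathbin{\#} L(9,2)$ bounds a rational homology ball (as does its reverse), has $H_1 \cong G\oplus G$ so Hantzsche does not obstruct it, and yet is not of the form $Y\mathbin{\#} -Y$ and does not embed in $S^4$. The essential extra input, which the paper emphasizes, is that the two rational balls glue along $L$ to form $S^4$; integrally this forces the images of $H^2(W_\pm;\Z)\to H^2(L;\Z)$ to jointly generate, i.e.\ the augmented matrix $(A \mid A')$ of the two lattice embeddings must be surjective -- Donald's ``linear double subset'' condition. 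Without identifying and using this joint condition, no amount of analysis of the individual embeddings can force the parity of the $p_i$ or the pairing of the summands. Finally, the combinatorial classification that you correctly call ``the heart of the proof'' is left entirely undone: you describe what must be shown rather than showing it, so even with the correct constraint in hand the proposal remains an outline rather than a proof.
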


The problem of embedding 3-manifolds into $S^4$ is closely related to the doubly sliceness of a knot. A knot is called \emph{smoothly slice} if it arises as the cross-section of a smooth 2-knot in $S^4$. Furthermore, it is called \emph{smoothly doubly slice} if it arises as the cross-section of a smoothly \emph{unknotted} 2-knot in $S^4$. Thus Zeeman's proof on 1-twist spinning of $K$, implies that $K \mathbin{\#} -K$ is smoothly doubly slice. Moreover, if a knot is smoothly doubly slice, then as observed earlier, its double-branched cover smoothly embeds in $S^4$. In particular, we have the following corollary:

\begin{thm}[{\cite{Donald:2015-1}}]\label{thm:donald} Let $\{K(p_i,q_i)\}_{1\leq i \leq n}$ be a collection of 2- bridge knots. If $K = \#_{i=1}^n K(p_i,q_i)$, then $K$ is smoothly doubly slice    if and only if there exists a knot $J$ such that $K$ is equivalent to $J \mathbin{\#} -J$.
\end{thm}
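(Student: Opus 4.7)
The forward implication is already explained in the text preceding the statement: if $K \cong J \mathbin{\#} -J$ for some knot $J$, then $K$ arises as a cross-section of Zeeman's $1$-twist spin $S_1(J)$, which is a smoothly unknotted $2$-knot in $S^4$. Hence $K$ is smoothly doubly slice, and no a priori restriction on $J$ is needed here.

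For the converse, suppose $K = \#_{i=1}^{n} K(p_i,q_i)$ is smoothly doubly slice. Every summand is a knot rather than a two-component link, so each $p_i$ is odd, and $\Sigma_2(K) \cong \#_{i=1}^{n} L(p_i,q_i)$. Since $K$ is smoothly doubly slice, its double-branched cover smoothly embeds in $S^4$, and Donald's characterization (the theorem immediately preceding) furnishes a closed $3$-manifold $Y$ with $\#_{i=1}^{n} L(p_i,q_i) \cong Y \mathbin{\#} -Y$.

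To bootstrap this back to the knot level I would first invoke uniqueness of the prime decomposition of oriented $3$-manifolds. Each $L(p_i,q_i)$ is irreducible (its universal cover is $S^3$) and hence prime, so the prime summands of $Y$ must themselves be lens spaces drawn from the given collection. Writing $Y \cong \#_{j=1}^{m} L(a_j,b_j)$, the unordered multiset $\{L(p_i,q_i)\}_{i=1}^{n}$ agrees, as oriented $3$-manifolds, with $\{L(a_j,b_j)\}_{j=1}^{m} \sqcup \{-L(a_j,b_j)\}_{j=1}^{m}$. Setting $J := \#_{j=1}^{m} K(a_j,b_j)$ then gives a candidate whose double-branched cover is $Y$, and hence $\Sigma_2(J \mathbin{\#} -J) \cong Y \mathbin{\#} -Y \cong \Sigma_2(K)$.

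The final step is to transfer this matching back through the Hodgson-Rubinstein bijection $K(p,q) \leftrightarrow L(p,q)$ between (oriented) $2$-bridge knots and lens spaces, which sends mirror image to orientation reversal. The equality of oriented lens-space multisets above translates into an equality of oriented $2$-bridge-knot multisets; since $2$-bridge knots are prime, Schubert's uniqueness of the prime decomposition of knots then upgrades this to the equivalence $K \cong J \mathbin{\#} -J$. The main obstacle I anticipate is careful book-keeping in this final step: one must align orientations through three distinct uniqueness statements (Kneser-Milnor for $3$-manifolds, Schubert for knots, and the $2$-bridge/lens-space correspondence) and verify that orientation reversal of knots truly corresponds to orientation reversal of their double-branched covers.
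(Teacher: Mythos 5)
Your proposal is correct and follows essentially the same route the paper (and Donald's original argument) takes: Zeeman's $1$-twist spin for the ``if'' direction, and for the ``only if'' direction the passage to $\Sigma_2(K)$ embedded in $S^4$, the preceding theorem on connected sums of lens spaces, Milnor's unique prime decomposition, and the Hodgson--Rubinstein/Schubert correspondence between $2$-bridge knots and their double branched covers. The only cosmetic quibble is that in the last step uniqueness of prime decomposition of knots is not actually needed (once the multisets of $2$-bridge summands match, $K\cong J\#-J$ follows from commutativity of connected sum); what is really used there is the injectivity of $K(p,q)\mapsto L(p,q)$, which you do invoke correctly.
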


We comment on the proof of Theorem~\ref{thm:donald} and the relevance of Lisca's work \cite{Lisca:2007-1, Lisca:2007-2} in relation to the theorem. If a connected sum of lens spaces $L = \#_{i=1}^n L(p_i,q_i)$ smoothly embeds in $S^4$, then it separates $S^4$ into two smooth compact manifolds $X$ and $X'$. Moreover, it can be easily shown, using the Mayer-Vietoris sequence, that $X$ and $X'$ are rational homology balls; that is, $H_*(X; \mathbb{Q}) \cong H_*(X'; \mathbb{Q}) \cong H_*(B^4; \mathbb{Q})$. Let $\natural_i X(p_i,q_i)$ denote the boundary connected sum of canonical negative-definite plumbings so that $\partial \left(\natural_i X(p_i,q_i)\right) \cong L$. By gluing $\natural_i X(p_i,q_i)$ with $X$, we obtain a smooth closed negative-definite 4-manifold $$W := \natural_i X(p_i,q_i) \cup_{L} -X.$$ The Mayer-Vietoris sequence then implies $b_2(\natural_i X(p_i,q_i)) = b_2(W)$. Similarly, using the boundary connected sum of canonical negative-definite plumbings $\natural_i X(p_i,p_i-q_i)$, we obtain another smooth closed negative-definite 4-manifold $$W' := \natural_i X(p_i,p_i-q_i) \cup_{-L} X,$$ with $b_2(\natural_i X(p_i,p_i-q_i)) = b_2(W')$. For simplicity, we refer to the standard negative-definite lattice $(\Z^n,\langle-1\rangle^n)$ as $\langle-1\rangle^n$. Donaldson's diagonalization theorem~\cite{Donaldson:1987-1} implies that both $W$ and $W'$ have the standard intersection forms. The inclusions $\natural_i X(p_i,p_i) \hookrightarrow W$ and $\natural_i X(p_i,p_i-q_i) \hookrightarrow W'$ induce morphisms of integral lattices
$$Q_{\natural_i X(p_i,p_i)}\hookrightarrow \langle-1\rangle^n \qquad \text{ and } \qquad Q_{\natural_i X(p_i,p_i-q_i)}\hookrightarrow \langle-1\rangle^{n'}$$
where $n = b_2(\natural_i X(p_i,p_i))$ and $n' = b_2(\natural_i X(p_i,p_i-q_i))$. The surprising result of Lisca~\cite{Lisca:2007-1,Lisca:2007-2} proved that this condition is equivalent to the existence of a rational homology ball bounded by $L$. Moreover, it also provides a classification for smoothly slice connected sums of 2-bridge knots. So far, we have not used the fact that there are two rational homology balls, $X$ and $X'$, which glue together to give $S^4$. These assumptions impose further restrictions on the morphisms of integral lattices, termed a \emph{linear double subset} in \cite{Donald:2015-1} (see also Section~\ref{subsec:SeiferFibered}). Combined with the combinatorial framework established by Lisca, Donald utilizes this stronger obstruction to prove Theorem~\ref{thm:donald}.

\subsection{Embedding Seifert fibered spaces in $S^4$ }\label{subsec:SeiferFibered}
Mazur~\cite{Mazur:1961-1} showed that if a contractible 4-manifold admits a handle decomposition with a single 1-handle and a single 2-handle, then its double is diffeomorphic to the standard $S^4$. In particular, if a homology sphere is bounded by such a Mazur manifold, then it smoothly embeds in $S^4$. This construction has been used to embed many Brieskorn spheres in $S^4$. For example, the families found by Akbulut and Kirby~\cite{Akbulut-Kirby:1979-1} and Casson and Harer~\cite{Casson-Harer:1981-1} (see \cite{Fickle:1984, Oguz:2024-1} for more examples):


\begin{thm}[{\cite{Akbulut-Kirby:1979-1,Casson-Harer:1981-1}}]\label{Thm:Mazur} The following Brieskorn homology spheres smoothly embed in $S^4$:
\begin{enumerate}

    \item $\Sigma(p, ps \pm 1, ps \pm 2)$ for $p$ odd, and
    \item $\Sigma(p, ps - 1, ps +1)$ for $p$ even and $s$ odd.
\end{enumerate}
\end{thm}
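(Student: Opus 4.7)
The plan is to exhibit, for each family, an explicit Mazur manifold $W$ whose boundary is diffeomorphic to the claimed Brieskorn sphere, and then invoke the result cited just before the theorem: since $W$ is built from a single $0$-handle, a single $1$-handle and a single $2$-handle, Mazur's double argument gives $W \cup_{\partial W} (-W) \cong S^4$, so $\partial W$ smoothly embeds in $S^4$ as a separating hypersurface. Contractibility of $W$ (needed for this to be a homology sphere bounding) is immediate from the handle decomposition provided that the $2$-handle is attached along a curve which, read in $\pi_1$ of the $1$-handle's core circle, generates $\mathbb{Z}$.

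For family (1) I would use the Akbulut--Kirby style handlebody: take a dotted unknot (the $1$-handle) together with a single $2$-handle whose attaching circle is a specific curve that winds once around the $1$-handle and then enters a $(p,ps\pm 2)$-style cabling pattern, with framing chosen in terms of $s$ so that cancellation with the $1$-handle gives the right monodromy. For family (2), with $p$ even and $s$ odd, I would use the Casson--Harer diagrams, which replace the single curve by a symmetric pretzel-like pattern adapted to the parity constraints. In both cases the candidate $W$ is visibly built from exactly one $1$-handle and one $2$-handle, and one checks contractibility by reading off $\pi_1$ from the diagram: the $2$-handle kills the $1$-handle's generator, and the resulting $2$-complex is simply connected with trivial $H_2$.

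The main obstacle, and the computational heart of the argument, is verifying that $\partial W$ is in fact the asserted Brieskorn homology sphere. For this I would apply Kirby calculus: first surger the dotted $1$-handle into a $0$-framed unknot to obtain a pure surgery diagram, then by a sequence of blow-ups, handle-slides and Rolfsen twists rearrange the link into the linear plumbing graph associated to the Seifert fibered presentation $\Sigma(p,q,r) \cong S^2(e,(p,q_1),(q,q_2),(r,q_3))$ from Section 2. The continued fraction expansions that appear from the twist parameter $s$ are exactly those dictated by the negative continued fractions of the Seifert invariants of $\Sigma(p,ps\pm 1,ps\pm 2)$ and $\Sigma(p,ps-1,ps+1)$, so the two plumbing graphs match. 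Once $\partial W$ is identified with the desired Brieskorn sphere, Mazur's theorem completes the proof.
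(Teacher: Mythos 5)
Your proposal follows exactly the route the paper takes: the paper gives no independent proof, but attributes the result to the Mazur-manifold constructions of Akbulut--Kirby and Casson--Harer, invoking Mazur's observation that the double of a contractible $4$-manifold with a single $1$-handle and a single $2$-handle is $S^4$, so that its boundary embeds. Your outline reproduces this mechanism correctly; the only content you defer --- the explicit handle diagrams and the Kirby-calculus identification of $\partial W$ with the stated Brieskorn spheres --- is precisely what is contained in the cited references.
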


On the other hand, if a homology sphere $Y$ smoothly embeds in $S^4$, it separates $S^4$ into two smooth compact manifolds, $X$ and $X'$, which are both homology balls; that is, $H_*(X; \mathbb{Z}) \cong H_*(X'; \mathbb{Z}) \cong H_*(B^4; \mathbb{Z})$. In particular, if a homology sphere does not bound a smooth homology ball, it does not smoothly embed into $S^4$. Many gauge theoretical arguments have been developed to obstruct homology spheres from bounding $B^4$ (see, e.g., \cite{Fintushel-Stern:1985-1, Furuta:1990-1}). However, in this survey, we focus on results that obstruct the smooth embedding of Seifert fibered spaces into $S^4$, derived from Donaldson’s diagonalization theorem~\cite{Donaldson:1987-1}. We remark that the obstruction crucially utilizes the fact that there is a smooth embedding in $S^4$, rather than just the existence of a smooth homology ball filling.

Let $Y$ be a Seifert fibered space with base orbifold $S^2$. When $\varepsilon(Y)=0$, Donald~\cite{Donald:2015-1} (see also \cite{Hillman:2009-1}) obtains the following:

\begin{thm}[{\cite[Theorem 1.3]{Donald:2015-1}}]\label{Thm:e=0case} Let $Y$ be a Seifert fibered space over $S^2$ and $\varepsilon(Y)=0$. If $Y$ smoothly embeds in $S^4$, then $Y$ can be written in the form
$$Y= S^2(0,(p_1,q_1),(-p_1,q_1),(p_2,q_2),(-p_2,q_2)\ldots, (p_k,q_k),(-p_k,q_k)).$$
\end{thm}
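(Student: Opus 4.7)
\medskip
\noindent\textbf{Proof Proposal.} The plan is to combine the semidefinite version of Donaldson's diagonalization theorem with a lattice combinatorial analysis, adapting the Lisca/Donald strategy from the lens space case, and crucially using that $Y$ \emph{separates} $S^4$ rather than merely bounds a rational homology ball on one side.

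First, put $Y = S^2(e,(p_1,q_1),\ldots,(p_n,q_n))$ in standard form. Since $\varepsilon(Y)=0$, the plumbing matrix associated to the graph $\Gamma$ of Section~2.2 is negative semidefinite with a one-dimensional radical spanned by a regular fiber class. The corresponding plumbed 4-manifold $X_\Gamma$ therefore provides a negative semidefinite filling of $Y$. Next, using the assumed embedding, decompose $S^4 = X \cup_Y X'$ and form
\[
W := X_\Gamma \cup_Y (-X), \qquad W' := X_\Gamma \cup_Y (-X').
\]
A Mayer--Vietoris and duality computation, together with the fact that $H_2(S^4)=0$, shows that $W$ and $W'$ are closed, smooth, negative semidefinite 4-manifolds in which $Q_\Gamma$ embeds, modulo torsion, with a controlled contribution from $H_1(Y)$ distributed between $X$ and $X'$.

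Quotienting by the radical turns $Q_W$ and $Q_{W'}$ into negative definite forms, so Donaldson's theorem supplies two morphisms of integral lattices
\[
\iota,\ \iota' : Q_\Gamma \longrightarrow \langle -1\rangle^N,
\]
both of which must kill the radical of $Q_\Gamma$. Each arm of $\Gamma$ corresponds to a linear plumbing filling of $L(p_i,q_i)$, so by Lisca's classification the restriction of $\iota$ (respectively $\iota'$) to each arm realises a ``standard'' chain of vectors in $\langle -1\rangle^N$. Up to this point, the situation is a semidefinite analogue of Donald's lens space theorem.

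The crux of the proof is to exploit the double embedding. Because $X$ and $X'$ glue to $S^4$, the two induced morphisms $\iota$ and $\iota'$ are not independent; together they must cover the standard basis of $\langle-1\rangle^N$ exactly twice, giving a ``linear double subset'' as in \cite{Donald:2015-1} adapted to the semidefinite setting. The arm-by-arm Lisca combinatorics then force a pairing on the set of singular fibers: each arm corresponding to $(p_i,q_i)$ must be matched with another whose continued fraction expansion is the ``mirror'' of the first, and this mirror is exactly the expansion arising from $(-p_i,q_i)$. Summing the Euler contributions over the matched pairs, the condition $\varepsilon(Y)=0$ combined with the cancellation $q_i/p_i+q_i/(-p_i)=0$ forces $e=0$, yielding the claimed form of $Y$.

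The main obstacle will be Step~3 together with the pairing argument: one has to track the one-dimensional radical through both $\iota$ and $\iota'$ and verify that the rigidity of Lisca's arm classification persists after passing to the quotient by the radical. In particular, the compatibility that the two arm embeddings must together account for each generator of the standard lattice precisely twice is what upgrades ``filling by a rational homology ball'' (symmetric in Lisca's theorem) to ``embedding into $S^4$'' (which demands the explicit involution pairing $(p_i,q_i)\leftrightarrow (-p_i,q_i)$).
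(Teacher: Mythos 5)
You should first be aware that the paper does not prove this theorem itself: it is quoted from Donald's work, and the surrounding text only sketches the strategy common to the $\varepsilon=0$ and $\varepsilon<0$ cases --- gluing the (semi)definite plumbing $X_\Gamma$ to both sides $X$ and $X'$ of the embedding, applying Donaldson's theorem to the resulting closed negative-definite manifolds $W$ and $W'$, and extracting two lattice morphisms $Q_\Gamma \to \langle-1\rangle^N$ whose transposed matrices form a jointly surjective block $(A\,\vert\,A')$. Your setup reproduces this faithfully, including the correct observation that, since the intersection form of a closed $W$ is unimodular and hence negative \emph{definite}, both morphisms must kill the one-dimensional radical of $Q_\Gamma$ (so, contrary to your earlier phrase ``in which $Q_\Gamma$ embeds,'' these maps are not embeddings but factor through $Q_\Gamma/\mathrm{rad}$; note also that since $b_1(Y)=1$ here, $X$ and $X'$ are not rational homology balls and the Mayer--Vietoris bookkeeping needs more care than in the lens space case).

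The genuine gap is in the combinatorial heart, which is where essentially all the content of the theorem lies. You assert that ``by Lisca's classification the restriction of $\iota$ to each arm realises a standard chain of vectors,'' but Lisca's classification does not apply to the restriction of an arbitrary lattice morphism to an arm: it classifies embeddings of linear lattices subject to strong global constraints, whereas here each arm sits inside a star-shaped lattice together with a central vertex and is subject to one linear relation imposed by the killed radical. Likewise, the ``linear double subset'' structure in Donald's paper is tailored to disjoint unions of \emph{linear} graphs (the connected-sum-of-lens-spaces case); for star-shaped graphs the joint surjectivity of $(A\,\vert\,A')$ is the correct condition, but deducing from it that the arms pair off with complementary continued fractions --- equivalently, that the multiset of Seifert invariants is invariant under $(p,q)\mapsto(-p,q)$ --- is precisely the delicate argument that you have labelled ``the main obstacle'' rather than supplied. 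As written, the proposal establishes the framework that the survey already describes but stops short of the statement: the step from ``two morphisms killing the radical, jointly surjective'' to ``the singular fibers pair up'' is missing. (The final deduction that $e=0$ follows from $\varepsilon(Y)=0$ once the pairing is known is fine.)
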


For the case when $\varepsilon(Y)<0$, Issa and McCoy~\cite{Issa-McCoy:2020-1} obtained the following result:

\begin{thm}[{\cite[Theorem 1.1]{Issa-McCoy:2020-1}}]\label{Thm:e<0case} Let $Y= S^2(e,(p_1,q_1),(p_2,q_2),\ldots, (p_n,q_n))$ be a Seifert fibered space over $S^2$ in standard form and $\varepsilon(Y)<0$. If $Y$ smoothly embeds in $S^4$, then $e \geq - \frac{k+1}{2}$. Moreover, if $e = - \frac{k+1}{2}$, then $Y$ smoothly embeds in $S^4$ if and only if $Y$ can be written in the form
$$Y= S^2\left(e,(-a,a-1),(-a,1),(-a,a-1),\ldots, (-a,1),(-a,a-1)\right).$$
\end{thm}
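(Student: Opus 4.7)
The plan is to leverage the splitting of $S^4$ induced by the embedding, apply Donaldson's diagonalization theorem to the resulting closed negative-definite 4-manifolds, and extract the inequality via a combinatorial analysis of lattice embeddings, in the spirit of the Lisca--Donald framework of Section~\ref{subsec:SeiferFibered}. Since $\varepsilon(Y)<0$, the space $Y$ is a rational homology sphere and the canonical plumbing $X_\Gamma$ is negative-definite with $\partial X_\Gamma = Y$; similarly, $-Y$ bounds a negative-definite plumbing $X_{\Gamma'}$. If $Y \hookrightarrow S^4$ separates $S^4 = X_1 \cup_Y X_2$, then Mayer--Vietoris forces both $X_i$ to be rational homology balls. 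Gluing yields closed, smooth, negative-definite 4-manifolds
$$W := X_\Gamma \cup_Y (-X_1) \quad \text{and} \quad W' := X_{\Gamma'} \cup_{-Y} (-X_2),$$
and Donaldson's theorem then provides lattice embeddings
$$\phi \colon Q_\Gamma \hookrightarrow \langle -1 \rangle^N \quad \text{and} \quad \phi' \colon Q_{\Gamma'} \hookrightarrow \langle -1 \rangle^{N'}.$$
Crucially, $X_1 \cup_Y X_2 = S^4$ couples $\phi$ and $\phi'$, producing a linear-double-subset-type compatibility generalizing Donald's construction for lens spaces.

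Let $v_0 \in V(\Gamma)$ be the central vertex (weight $e$) and $v_j^1$ the root of the $j$-th arm (weight $a_j^1 \leq -2$). Writing $c := \phi(v_0)$ and $d^j := \phi(v_j^1)$ in the standard orthogonal basis of $\langle -1 \rangle^N$, the lattice relations of $Q_\Gamma$ become
$$\sum_i c_i^2 = |e|, \qquad \sum_i c_i d_i^j = 1, \qquad \sum_i d_i^j d_i^{j'} = 0 \text{ for } j \neq j',$$
plus further relations from the tails $v_j^2, \ldots, v_j^{h_j}$ forcing rigid shapes on each arm image (via Lisca's classification of linear-plumbing embeddings). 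In particular, $|\operatorname{supp}(c)| \leq |e|$, and each $d^j$ has nontrivial support on $\operatorname{supp}(c)$.

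The heart of the proof is a coordinate-counting argument yielding $e \geq -\tfrac{n+1}{2}$, where $n$ is the number of exceptional fibers (the $k$ of the statement). The rough mechanism: each arm meets $\operatorname{supp}(c)$ in a restricted pattern, and the linear-double-subset compatibility between $\phi$ and $\phi'$ enforces a balanced sharing of the coordinates of $\operatorname{supp}(c)$ between the arms on the two sides of the splitting; assembling these constraints against the $n$ arms yields the bound on $|e|$. For the equality case, one retraces the estimates with equality saturated, pinning down each arm of $\Gamma$ to either the chain $[\underbrace{-2, \ldots, -2}_{a-1}]^-$ (the continued-fraction expansion of $-a/(a-1)$) or the single vertex $[-a]^-$, arranged in the alternating pattern stated; the converse embedding is then exhibited by an explicit handle construction producing matching rational homology balls $X_1, X_2$ that glue along $\pm Y$ to $S^4$.

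The main obstacle is the coordinate-counting step: a single-embedding analysis is insufficient to reach the sharp constant $(n+1)/2$, and the essential input is the linear-double-subset compatibility between $\phi$ and $\phi'$ forced by the $S^4$ structure. Importing this arithmetic from Donald's linear-plumbing setting to the star-shaped setting --- while tracking how arms of different lengths simultaneously interact with the central vertex on both sides --- is the core work of the proof.
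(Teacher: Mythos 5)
This theorem is quoted from Issa--McCoy and the survey does not reprove it; it only sketches the intended strategy (split $S^4$ along $Y$ into $X$ and $X'$, glue the canonical plumbing to each piece, apply Donaldson, and feed the two resulting lattice embeddings into the surjectivity condition on $(A\,\vert\,A')$). Your proposal correctly identifies that general strategy, but it contains a setup error and then stops exactly where the proof begins. The setup error: you assert that ``$-Y$ bounds a negative-definite plumbing $X_{\Gamma'}$'' and form $W' = X_{\Gamma'} \cup_{-Y} (-X_2)$. When $\varepsilon(Y) < 0$ one has $\varepsilon(-Y) > 0$, so the canonical plumbing for $-Y$ is \emph{positive}-definite, and in general $-Y$ need not bound any negative-definite plumbing at all (you are implicitly importing the lens-space picture, where $-L(p,q) \cong L(p,p-q)$ conveniently bounds $X(p,p-q)$). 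The correct construction, as in the survey's sketch and in Issa--McCoy, glues the \emph{same} $X_\Gamma$ to both complementary pieces, $W = X_\Gamma \cup_Y (-X_1)$ and $W' = X_\Gamma \cup_Y (-X_2)$, producing two embeddings of the single lattice $Q_\Gamma$ into $\langle -1\rangle^n$ with $n = b_2(X_\Gamma)$, together with the condition that the augmented matrix $(A\,\vert\,A')$ is surjective. Your ``linear-double-subset-type compatibility'' between embeddings of two different lattices is not the statement that is actually available.

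The larger problem is that the entire content of the theorem --- the inequality $e \geq -\tfrac{k+1}{2}$ and the classification in the equality case --- is described only as ``a coordinate-counting argument'' whose ``rough mechanism'' you outline in a sentence, and you yourself flag this as ``the core work of the proof.'' Nothing in the proposal derives the constant $\tfrac{k+1}{2}$, explains why a single embedding is insufficient but two compatible ones suffice, or pins down the arms $(-a,a-1)$ and $(-a,1)$ in the boundary case; nor is the converse direction (the explicit embedding of the extremal $Y$ into $S^4$) constructed. As written, the proposal is a restatement of the known framework plus a declaration that the hard combinatorics should work out, so it is not a proof.
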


In fact, both results are valid for Seifert fibered spaces with any orientable base surface (see \cite[Theorem 1.3]{Donald:2015-1} and \cite[Theorem 1.1]{Issa-McCoy:2020-1} for the precise statements). Moreover, as in the lens space cases, these results have implications on the double sliceness of knots whose double branched covers are Seifert fibered spaces (see \cite[Theorem 1.11 and Proposition 1.12]{Issa-McCoy:2020-1} for the consequences and \cite[Theorem 1.1]{McCoy-McDonald:2024-1} for related results).


The above theorem stem from the following idea, combined with delicate combinatorial arguments. If a Seifert fibered space $Y$ smoothly embeds in $S^4$, then it separates $S^4$ into two smooth compact manifolds, $X$ and $X'$. Recall that if $Y$ is in standard form, then it bounds a plumbed smooth negative semidefinite 4-manifold $X_\Gamma$ prescribed by a plumbing graph $\Gamma$ associated with $Y$. By gluing $X_\Gamma$ with $X$, we obtain a smooth closed negative-definite 4-manifold $$W := X_\Gamma \cup_{Y} -X.$$ Similarly, by gluing $X_\Gamma$ with $X'$, we obtain another smooth closed negative-definite 4-manifold $$W' := X_\Gamma \cup_{Y} -X'.$$ Moreover, Donaldson's diagonalization theorem~\cite{Donaldson:1987-1} implies that both $W$ and $W'$ have the standard intersection forms. The inclusions $X_\Gamma \hookrightarrow W$ and $X_\Gamma \hookrightarrow W'$ induce morphisms of integral lattices
$$Q_{\Gamma}\hookrightarrow \langle-1\rangle^n \qquad \text{ and } \qquad Q_{\Gamma}\hookrightarrow \langle-1\rangle^n,$$ where $n = b_2(X_\Gamma)$. These embeddings can be represented by integral matrices, with their transposes denoted as $A$ and $A'$, respectively. The inclusions $Y  \hookrightarrow W$ and $Y \hookrightarrow W'$ induces maps on cohomology: 
$$H^2 (W; \Z) \to H^2 (Y ; \Z) \qquad \text{ and } \qquad H^2 (W'; \Z) \to H^2 (Y ; \Z),$$
which can be understood in terms of the matrices $A$ and $A'$
More precisely, we may identify $H^2 (Y ; \Z)$ with $\Z^n/\mathrm{im}(Q_\Gamma)$ in such a way that the images of the above maps correspond to $\mathrm{im}(A)/\mathrm{im}(Q_\Gamma)$ and $\mathrm{im}(A')/\mathrm{im}(Q_\Gamma)$~\cite[Theorem 3.6]{Donald:2015-1}. This, combined with the fact that the inclusion maps induce an isomorphism on the torsion parts of $H^2(X; \mathbb{Z}) \oplus H^2(X'; \mathbb{Z})$ and $H^2(Y; \mathbb{Z})$, allows us to conclude the following:

\begin{thm}[{\cite[Theorem 3.9]{Donald:2015-1}, \cite[Theorem 5.2]{Issa-McCoy:2020-1}}] Let $Y$ be the boundary of a smooth, negative definite, plumbed 4-manifold $X_\Gamma$ prescribed by a plumbing graph $\Gamma$. If $Y$ smoothly embeds in $S^4$, then there exist morphisms of integral lattices
    $$Q_{\Gamma}\hookrightarrow \langle-1\rangle^n \qquad \text{ and } \qquad Q_{\Gamma}\hookrightarrow \langle-1\rangle^{n},$$ where $n = b_2(X_\Gamma)$, such that the augmented matrix $(A \vert A')$ is surjective, where $A$ and $A'$ are the transposes of the integer matrices representing the morphisms of integral lattices.
\end{thm}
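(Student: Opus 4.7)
The plan is to exploit the decomposition $S^4 = X \cup_Y X'$ coming from the embedding, cap each side with the given plumbing to produce two closed negative-definite 4-manifolds, and then read off the required surjectivity from Mayer--Vietoris after applying Donaldson's theorem. First, observe that $Y$ is automatically a rational homology sphere: since $Q_\Gamma$ is non-degenerate, $H_1(Y) \cong \Z^n/\mathrm{im}(Q_\Gamma)$ is finite. Therefore an embedding $Y \hookrightarrow S^4$ separates $S^4$ into compact 4-manifolds $X, X'$ with $\partial X = Y = -\partial X'$, both of which must be rational homology balls by a Mayer--Vietoris argument over $\mathbb{Q}$. Capping each side with the plumbing produces
\[
W := X_\Gamma \cup_Y (-X), \qquad W' := X_\Gamma \cup_Y (-X'),
\]
both closed smooth negative-definite 4-manifolds with $b_2 = n$. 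Donaldson's diagonalization theorem supplies isometries $Q_W \cong Q_{W'} \cong \langle -1 \rangle^n$, and the inclusions $X_\Gamma \hookrightarrow W$ and $X_\Gamma \hookrightarrow W'$ realise the two required lattice embeddings $Q_\Gamma \hookrightarrow \langle -1 \rangle^n$; let $A$, $A'$ be the transposes of the representing matrices.

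Next I would identify $H^2(Y;\Z)$ with $\Z^n/\mathrm{im}(Q_\Gamma)$ using the long exact sequence of the pair $(X_\Gamma, Y)$. Poincar\'e--Lefschetz duality gives $H^2(X_\Gamma, Y) \cong H_2(X_\Gamma) \cong \Z^n$ and $H^3(X_\Gamma, Y) \cong H_1(X_\Gamma) = 0$, so $H^2(Y)$ is exactly the cokernel of the intersection form $Q_\Gamma$; in particular the restriction $r_{X_\Gamma}\colon H^2(X_\Gamma) \to H^2(Y)$ is surjective. Under this identification the restriction $H^2(W) \to H^2(Y)$ factors as $H^2(W) \xrightarrow{A} H^2(X_\Gamma) \twoheadrightarrow H^2(Y)$ (and likewise for $W'$ with $A'$), so its image is $\mathrm{im}(A)/\mathrm{im}(Q_\Gamma)$, where one uses $A A^T = -Q_\Gamma$ to see that $\mathrm{im}(Q_\Gamma) \subseteq \mathrm{im}(A)$; this is essentially \cite[Theorem 3.6]{Donald:2015-1}.

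The core step is then to show that these two images together span $H^2(Y)$, which translates back to surjectivity of the augmented matrix. Mayer--Vietoris for $W = X_\Gamma \cup_Y (-X)$, using $H^1(Y) = 0$, gives an exact sequence
\[
0 \to H^2(W) \to H^2(X_\Gamma) \oplus H^2(-X) \to H^2(Y),
\]
from which $\mathrm{im}(H^2(W) \to H^2(Y)) = \mathrm{im}(r_{X_\Gamma}) \cap \mathrm{im}(r_{-X}) = \mathrm{im}(r_{-X})$, since $r_{X_\Gamma}$ is already surjective; similarly for $W'$. Meanwhile Mayer--Vietoris for $S^4 = X \cup_Y X'$ (with $H^1(Y) = H^2(S^4) = H^3(S^4) = 0$) produces the isomorphism $H^2(X) \oplus H^2(X') \xrightarrow{\sim} H^2(Y)$ by restriction, which is precisely the isomorphism on torsion parts invoked in the statement. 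In particular $\mathrm{im}(r_X) + \mathrm{im}(r_{X'}) = H^2(Y)$, and since cohomology is insensitive to orientation $\mathrm{im}(r_{-X}) = \mathrm{im}(r_X)$ and similarly for $X'$. Translating back through $H^2(Y) \cong \Z^n/\mathrm{im}(Q_\Gamma)$ then yields $\mathrm{im}(A) + \mathrm{im}(A') = \Z^n$, which is exactly surjectivity of $(A \vert A')$.

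The main obstacle will be the bookkeeping: matching the two a priori distinct identifications of $H^2(Y)$ as the cokernel of $Q_\Gamma$ coming from the $W$- and $W'$-sides, and tracking signs through the various Mayer--Vietoris diagrams so that the images appearing in the final count really are $\mathrm{im}(A)$ and $\mathrm{im}(A')$. A secondary technical point is the applicability of Donaldson's theorem to $W$ and $W'$, which is fine because $b_1(W) = b_1(W') = 0$ follows from $X$, $X'$ being rational homology balls and $X_\Gamma$ being simply connected.
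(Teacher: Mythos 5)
Your proposal is correct and follows essentially the same route as the paper's own (sketched) argument: form $W = X_\Gamma \cup_Y -X$ and $W' = X_\Gamma \cup_Y -X'$, apply Donaldson's diagonalization, identify $H^2(Y;\Z)$ with $\Z^n/\mathrm{im}(Q_\Gamma)$ so that the restriction images become $\mathrm{im}(A)/\mathrm{im}(Q_\Gamma)$ and $\mathrm{im}(A')/\mathrm{im}(Q_\Gamma)$, and deduce surjectivity of $(A \vert A')$ from the Mayer--Vietoris isomorphism $H^2(X)\oplus H^2(X')\cong H^2(Y)$ for $S^4 = X\cup_Y X'$. You have merely filled in the cohomological bookkeeping that the paper delegates to Donald's Theorem 3.6.
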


\noindent This theorem is a key ingredient for 
Theorem~\ref{Thm:e<0case}.



\section{Embeddings in definite manifolds}
\subsection{Embeddings sums of lens spaces in definite manifolds}
In this section, we consider the problem of the existence of smooth embeddings of a closed 3-manifold $Y$ into smooth negative-definite manifolds, with a focus on the case when $Y$ is a lens space. Beyond lens spaces, it appears that little known about this problem \cite{Oguz:2024-2}.

Let $L(p,q)$ be a lens space with coprime integers $p$ and $q$ with $p>q>0$. Using the continued fraction expansion of $p/q$, Edmonds and Livingston~\cite{Edmonds-Livingston:1996-1} proved the following:

\begin{thm}[{\cite[Proposition 2.5]{Edmonds-Livingston:1996-1}}]\label{Thm:embeddingtocp2} Every lens space $L(p,q)$ smoothly embeds in $\#_n \overline{\mathbb{CP}}^2$ for some positive integer $n$.
\end{thm}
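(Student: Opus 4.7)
\noindent
The plan is to show that the canonical negative-definite plumbing $X(p,q)$ itself smoothly embeds in $\#_N \overline{\mathbb{CP}}^2$ for some $N$; since $L(p,q) = \partial X(p,q)$, this immediately yields the desired embedding of the lens space. Write $p/q = [a_1,\dots,a_k]^-$ with each $a_i \geq 2$, so that $X(p,q)$ is the plumbing of $D^2$-bundles over $S^2$ along a linear graph $\Gamma$ with weights $-a_1,\dots,-a_k$.

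The first step is to embed the intersection lattice $Q_\Gamma$ into $\langle -1 \rangle^N$ with a specific geometric shape in mind. Set $N = a_1 + (a_2-1) + \cdots + (a_k-1)$, let $e_1,\dots,e_N$ be the standard basis of $H_2(\#_N \overline{\mathbb{CP}}^2)$ with $e_i \cdot e_j = -\delta_{ij}$, and pick overlapping blocks $J_1,\dots,J_k \subset \{1,\dots,N\}$ of consecutive indices with $|J_i|=a_i$ and $|J_i \cap J_{i+1}|=1$. Set
\[
v_1 = \sum_{j\in J_1} e_j, \qquad v_i = -e_{\min J_i} + \sum_{j\in J_i \setminus\{\min J_i\}} e_j \quad (i\geq 2).
\]
A direct computation yields $v_i\cdot v_i = -a_i$, $v_i\cdot v_{i+1}=+1$, and $v_i\cdot v_j=0$ for $|i-j|\geq 2$, so the sublattice spanned by $\{v_i\}$ is isometric to $Q_\Gamma$.

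The second step is to realize each $v_i$ by an embedded 2-sphere $S_i$ in $\#_N \overline{\mathbb{CP}}^2$. For every index $j$ used by a single $v_i$, pick one oriented complex line in the $j$th summand; for every index $j$ shared by $v_i$ and $v_{i+1}$, pick two distinct complex lines in that summand meeting in exactly one point. Orient these lines compatibly with the signs in the definition of $v_i$, and build $S_i$ by tubing its constituent lines together along pairwise disjoint arcs lying in the connect-sum necks. Because $J_i$ and $J_{i'}$ are disjoint for $|i-i'|\geq 2$, the spheres $S_i$ and $S_{i'}$ are disjoint; adjacent $S_i,S_{i+1}$ meet in exactly one positive transverse point, namely the intersection of the two chosen lines in the shared summand. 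Since $[S_i] = v_i$, we have $S_i\cdot S_i = -a_i$, so a tubular neighborhood of $\bigcup_i S_i$ is the plumbing of normal disk bundles with the prescribed Euler numbers along $\Gamma$, namely $X(p,q)$. Its boundary is $L(p,q)$, which therefore embeds in $\#_N\overline{\mathbb{CP}}^2$.

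The main obstacle is the second step: verifying that the $S_i$ can be realized simultaneously with precisely the required geometric intersection pattern. One must check that the tubing arcs can be arranged in general position so as to be pairwise disjoint and to avoid creating excess intersections between $S_i$ and $S_{i+1}$ outside the prescribed one in the shared summand; and that, in each shared summand, a pair of distinct complex lines can be oriented so that their unique transverse intersection contributes $+1$ to $S_i\cdot S_{i+1}$. Both facts follow from standard transversality in the simply connected 4-manifold $\#_N\overline{\mathbb{CP}}^2$, but require careful bookkeeping.
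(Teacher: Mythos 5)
Your argument is correct and follows essentially the approach attributed to Edmonds--Livingston in the survey (which only cites the result and does not reproduce a proof): realize the canonical linear plumbing $X(p,q)$ inside $\#_N\overline{\mathbb{CP}}^2$ using the continued fraction expansion, and take the boundary of a regular neighborhood. Your lattice embedding is the standard one, and the arithmetic checks out; note only that your Gram matrix has $+1$ in the off-diagonal entries rather than the $-1$ of Definition~\ref{def:QGamma}, which is harmless since the two lattices are isometric and, for a linear graph, all edge-sign choices yield diffeomorphic plumbings. The step you flag as the main obstacle --- simultaneously realizing the classes $v_i$ by spheres with exactly the prescribed geometric intersections --- is genuine but routine, and in the literature it is usually bypassed altogether by constructing the configuration through iterated blow-ups: one starts with an exceptional sphere and blows up points on proper transforms as dictated by the continued fraction, so that at every stage the spheres are automatically embedded and meet transversally in single points of the correct sign, with no tubing or general-position bookkeeping required. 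Either route yields the theorem, with the same value of $N$.
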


Many interesting natural questions arise from this fact. We list a few here:

\begin{ques}\label{questions}
Assume that every embedding is separating.
\begin{enumerate}[label=(\arabic*), font=\upshape]
    \item\label{qu1} Given a lens space $L(p,q)$, what is the minimal integer $n$ such that $L(p,q)$ smoothly embeds in $\#_n \overline{\mathbb{CP}}^2$, or in a smooth closed negative-definite 4-manifold $W$ with $b_2(W) = n$?
    \item\label{qu2} Does there exist an integer $n$ so that every lens space smoothly embeds in $\#_n \overline{\mathbb{CP}}^2$, or in a smooth closed negative-definite 4-manifold $W$ with $b_2(W) = n$?
\end{enumerate}
\end{ques}


There are some partial answers provided by Edmonds and Livingston~\cite{Edmonds-Livingston:1996-1} (see \cite[Section 13]{Edmonds-Livingston:1996-1} for a nice summary of their results), where they obstructed some lens spaces from being smoothly embedded into $\#_n \overline{\mathbb{CP}}^2$ for small $n$. Further investigations were made by the authors in \cite{Aceto-McCoy-Park:2022-1}. First, we state their main theorem. Here, we present only a portion of it.

\begin{thm}[{\cite[Theorem 1.1]{Aceto-McCoy-Park:2022-1}}]\label{Thm:AMP2022}
If $L=\#_i L(p_i,q_i)$ is a connected sum of lens spaces and $\natural_i X(p_i,q_i)$ is the corresponding boundary connected sum of canonical negative-definite plumbings, then the following are equivalent:
\begin{enumerate}[label=(\roman*), font=\upshape]
\item\label{it:min_filling} every smooth negative-definite filling $W$ of $L$ satisfies $b_2(W) \geq b_2(\natural_i X(p_i,q_i))$;
\item\label{it:combinatorial} the canonical negative-definite linear plumbing graph associated to $L$ does not contain any of the following configurations as an induced subgraph:
\begin{multicols}{2}
\begin{enumerate}[label=(\alph*),font=\upshape]
\item\label{it:4} $\begin{tikzpicture}[xscale=1.0,yscale=1,baseline={(0,0)}]
    \node at (1-0.1, .4) {$-4$};
    \node (A_1) at (1, 0) {$\bullet$};
  \end{tikzpicture}$
\item\label{it:52} $\begin{tikzpicture}[xscale=1.0,yscale=1,baseline={(0,0)}]
    \node at (1-0.1, .4) {$-5$};
    \node at (2-0.1, .4) {$-2$};
    \node (A_1) at (1, 0) {$\bullet$};
    \node (A_2) at (2, 0) {$\bullet$};
        \path (A_1) edge [-] node [auto] {$\scriptstyle{}$} (A_2);
  \end{tikzpicture}$
\item\label{it:622} $\begin{tikzpicture}[xscale=1.0,yscale=1,baseline={(0,0)}]
    \node at (1-0.1, .4) {$-6$};
    \node at (2-0.1, .4) {$-2$};
    \node at (3-0.1, .4) {$-2$};
    \node (A_1) at (1, 0) {$\bullet$};
    \node (A_2) at (2, 0) {$\bullet$};
    \node (A_3) at (3, 0) {$\bullet$};
        \path (A_1) edge [-] node [auto] {$\scriptstyle{}$} (A_2);
    \path (A_2) edge [-] node [auto] {$\scriptstyle{}$} (A_3);
  \end{tikzpicture}$
\item\label{it:2-2} $\begin{tikzpicture}[xscale=1.0,yscale=1,baseline={(0,0)}]
    \node at (1-0.1, .4) {$-2$};
    \node at (2-0.1, .4) {$-2$};
    \node (A_1) at (1, 0) {$\bullet$};
    \node (A_2) at (2, 0) {$\bullet$};
  \end{tikzpicture}$
\item\label{it:3-22}  $\begin{tikzpicture}[xscale=1.0,yscale=1,baseline={(0,0)}]
    \node at (1-0.1, .4) {$-3$};
    \node at (2-0.1, .4) {$-2$};
    \node at (3-0.1, .4) {$-2$};
    \node (A_1) at (1, 0) {$\bullet$};
    \node (A_2) at (2, 0) {$\bullet$};
    \node (A_3) at (3, 0) {$\bullet$};
    \path (A_2) edge [-] node [auto] {$\scriptstyle{}$} (A_3);
  \end{tikzpicture}$
\item\label{it:33}
$\begin{tikzpicture}[xscale=1.0,yscale=1,baseline={(0,0)}]
    \node at (1-0.1, .4) {$-3$};
    \node at (2-0.1, .4) {$-3$};
    \node (A1_1) at (1, 0) {$\bullet$};
    \node (A1_2) at (2, 0) {$\bullet$};
    \path (A1_1) edge [-] node [auto] {$\scriptstyle{}$} (A1_2);
  \end{tikzpicture}$
\item\label{it:323} $\begin{tikzpicture}[xscale=1.0,yscale=1,baseline={(0,0)}]
    \node at (1-0.1, .4) {$-3$};
    \node at (2-0.1, .4) {$-2$};
    \node at (3-0.1, .4) {$-3$};
    \node (A1_1) at (1, 0) {$\bullet$};
    \node (A1_2) at (2, 0) {$\bullet$};
    \node (A1_3) at (3, 0) {$\bullet$};
    \path (A1_2) edge [-] node [auto] {$\scriptstyle{}$} (A1_3);
    \path (A1_1) edge [-] node [auto] {$\scriptstyle{}$} (A1_2);
  \end{tikzpicture}$
\item\label{it:3223}
$\begin{tikzpicture}[xscale=1.0,yscale=1,baseline={(0,0)}]
    \node at (1-0.1, .4) {$-3$};
    \node at (2-0.1, .4) {$-2$};
    \node at (3-0.1, .4) {$-2$};
    \node at (4-0.1, .4) {$-3$};
    \node (A1_1) at (1, 0) {$\bullet$};
    \node (A1_2) at (2, 0) {$\bullet$};
    \node (A1_3) at (3, 0) {$\bullet$};
    \node (A1_4) at (4, 0) {$\bullet$};
    \path (A1_2) edge [-] node [auto] {$\scriptstyle{}$} (A1_3);
    \path (A1_3) edge [-] node [auto] {$\scriptstyle{}$} (A1_4);
    \path (A1_1) edge [-] node [auto] {$\scriptstyle{}$} (A1_2);
  \end{tikzpicture}$
\item\label{it:3532} 
$\begin{tikzpicture}[xscale=1.0,yscale=1,baseline={(0,0)}]
    \node at (1-0.1, .4) {$-3$};
    \node at (2-0.1, .4) {$-5$};
    \node at (3-0.1, .4) {$-3$};
    \node at (4-0.1, .4) {$-2$};
    \node (A1_1) at (1, 0) {$\bullet$};
    \node (A1_2) at (2, 0) {$\bullet$};
    \node (A1_3) at (3, 0) {$\bullet$};
    \node (A1_4) at (4, 0) {$\bullet$};
    \path (A1_2) edge [-] node [auto] {$\scriptstyle{}$} (A1_3);
    \path (A1_3) edge [-] node [auto] {$\scriptstyle{}$} (A1_4);
    \path (A1_1) edge [-] node [auto] {$\scriptstyle{}$} (A1_2);
  \end{tikzpicture}$
\item\label{it:2235} $\begin{tikzpicture}[xscale=1.0,yscale=1,baseline={(0,0)}]
    \node at (1-0.1, .4) {$-2$};
    \node at (2-0.1, .4) {$-2$};
    \node at (3-0.1, .4) {$-3$};
    \node at (4-0.1, .4) {$-5$};
    \node (A1_1) at (1, 0) {$\bullet$};
    \node (A1_2) at (2, 0) {$\bullet$};
    \node (A1_3) at (3, 0) {$\bullet$};
    \node (A1_4) at (4, 0) {$\bullet$};
    \path (A1_2) edge [-] node [auto] {$\scriptstyle{}$} (A1_3);
    \path (A1_3) edge [-] node [auto] {$\scriptstyle{}$} (A1_4);
    \path (A1_1) edge [-] node [auto] {$\scriptstyle{}$} (A1_2);
  \end{tikzpicture}$;
\end{enumerate}
\end{multicols}
\end{enumerate}
\end{thm}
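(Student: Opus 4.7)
The plan is to convert the topological minimality question into a lattice-theoretic one via Donaldson's diagonalization theorem, and then invoke the combinatorial classification of diagonal lattice embeddings due to Lisca~\cite{Lisca:2007-1, Lisca:2007-2}.

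For the direction $(\ref{it:combinatorial}) \Rightarrow (\ref{it:min_filling})$, let $W$ be an arbitrary smooth negative-definite filling of $L$. Since $-L \cong \#_i L(p_i, p_i - q_i)$ admits the canonical negative-definite filling $\natural_i X(p_i, p_i - q_i)$, the gluing
$$Z := W \cup_L \natural_i X(p_i, p_i - q_i)$$
is smooth, closed, and negative-definite, with $b_2(Z) = b_2(W) + b_2(\natural_i X(p_i, p_i - q_i))$. Donaldson's diagonalization theorem then yields a lattice embedding
$$Q_{\natural_i X(p_i, p_i - q_i)} \hookrightarrow \langle -1 \rangle^{b_2(Z)}.$$
It therefore suffices to prove the purely combinatorial lemma: if the plumbing graph of $\natural_i X(p_i, q_i)$ contains none of the subgraphs (a)--(j), then every lattice embedding $Q_{\natural_i X(p_i, p_i - q_i)} \hookrightarrow \langle -1 \rangle^n$ forces $n \geq b_2(\natural_i X(p_i, q_i)) + b_2(\natural_i X(p_i, p_i - q_i))$, which immediately gives $b_2(W) \geq b_2(\natural_i X(p_i, q_i))$.

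The combinatorial lemma is the technical heart of the proof. Its proof extends Lisca's inductive classification of embeddings of plumbing lattices into diagonal lattices: Lisca provides a recursive reduction procedure on the continued fraction (equivalently, on the plumbing graph) that simplifies such embeddings, and we must track not merely the existence of an embedding but the minimal rank of the ambient $\langle -1 \rangle^n$. The ten forbidden configurations (a)--(j) arise precisely as the minimal local patterns enabling a reduction that drops the ambient rank below the canonical bound; their collective absence forces the rank inequality by induction on $n$, exploiting Hirzebruch--Jung duality to connect the combinatorics of the $q_i$-plumbing to that of the $(p_i - q_i)$-plumbing.

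For the converse $(\ref{it:min_filling}) \Rightarrow (\ref{it:combinatorial})$, we argue contrapositively. Assuming the plumbing graph contains one of the forbidden subgraphs, we exhibit an explicit smooth negative-definite filling $W$ of $L$ with $b_2(W) < b_2(\natural_i X(p_i, q_i))$. Each configuration admits a local handle-calculus simplification---for instance, substituting a known rational homology ball filling of a sub-lens-space (as with $L(4,1)$ for case (a)), blowing down, or cancelling a pair of handles---that preserves the boundary while strictly decreasing $b_2$ and retaining negative-definiteness; one then verifies in each of the ten cases that the resulting smaller filling is honestly negative-definite. The main obstacle throughout is the combinatorial lemma in the first direction, which demands a careful and lengthy case analysis sharpening Lisca's classification into a statement about minimal target ranks.
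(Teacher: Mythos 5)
Your proposal follows essentially the same route as the paper: for (ii)$\Rightarrow$(i), glue the filling to $\natural_i X(p_i,p_i-q_i)$, apply Donaldson's diagonalization theorem, and reduce to the combinatorial lemma that an embedding $Q_{\natural_i X(p_i,p_i-q_i)}\hookrightarrow\langle-1\rangle^n$ forces $n\geq b_2(\natural_i X(p_i,q_i))+b_2(\natural_i X(p_i,p_i-q_i))$; for (i)$\Rightarrow$(ii), locate a sub-plumbing determined by the forbidden configuration (the paper identifies these as copies of $X(4,1)$, $X(9,2)$, $X(16,3)$, or $X(64,23)$) and replace it with a rational homology ball to obtain a smaller definite filling. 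Both you and the paper defer the same combinatorial heart of the argument to Lisca-style lattice analysis, so the approaches agree.
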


Lens spaces that satisfy the conclusion of the theorem are referred to as \emph{$b_2$-minimal lens spaces}. First, we consider an immediate application to Question~\ref{questions} (providing only a partial answer to \ref{qu1}). For a more general statement, see \cite[Theorem 1.5]{Aceto-McCoy-Park:2022-1}.

\begin{cor}\label{cor:embeddingminimallensspace}
    Let $L(p,q)$ be a $b_2$-minimal lens space. Suppose $L(p,q)$ smoothly embeds in a smooth, closed, negative-definite 4-manifold $W$ as a separating manifold, then 
    $$ b_2(X(p,q)) \leq b_2(W).$$ In particular, for any integer $n$, there exist lens spaces that do not smoothly embed as a separating submanifold into any smooth, closed, negative-definite 4-manifold $W$ with $b_2(W) = n$ .
\end{cor}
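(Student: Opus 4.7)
The plan is to reduce the main inequality to a direct application of Theorem~\ref{Thm:AMP2022}, after which the ``in particular'' statement follows by exhibiting a suitable explicit family. Under the separating hypothesis we may write $W = X \cup_L X'$, where $L = L(p,q)$, $\partial X = L$, and $\partial X' = -L$. Since $L$ is a rational homology sphere, the Mayer--Vietoris sequence collapses to give an isomorphism $H_2(X;\Q) \oplus H_2(X';\Q) \cong H_2(W;\Q)$ induced by the inclusions, so $b_2(W) = b_2(X) + b_2(X')$. Moreover, the restriction of $Q_W$ to each summand agrees with the intersection form of the corresponding piece, and since $H_2(L;\Q) = 0$ these restricted forms are non-degenerate. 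As $Q_W$ is negative-definite, both $Q_X$ and $Q_{X'}$ are negative-definite as well.

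Thus $X$ is a smooth, negative-definite filling of $L(p,q)$. Since $L(p,q)$ is $b_2$-minimal by assumption, Theorem~\ref{Thm:AMP2022}(i) gives $b_2(X) \geq b_2(X(p,q))$. Combining with $b_2(X') \geq 0$ yields
$$b_2(W) = b_2(X) + b_2(X') \geq b_2(X(p,q)),$$
which is the desired inequality.

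For the second assertion I would exhibit, for each $n$, a $b_2$-minimal lens space with $b_2(X(p,q)) > n$. Inspecting the forbidden configurations \ref{it:4}--\ref{it:2235} in Theorem~\ref{Thm:AMP2022}, every vertex appearing in any of them carries a weight in $\{-2,-3,-4,-5,-6\}$. Hence any lens space whose canonical linear plumbing has all weights $\leq -7$ automatically avoids each obstruction as an induced subgraph, and is therefore $b_2$-minimal. Taking $p_k/q_k = [7,7,\ldots,7]^-$ ($k$ entries) yields such a lens space with $b_2(X(p_k,q_k)) = k$, and choosing $k > n$ produces the required example. I do not anticipate any substantive obstacle: the only subtlety is the verification in the Mayer--Vietoris step that both pieces are genuine negative-definite fillings, which is the ingredient that allows the main theorem to be applied on the side containing $X$.
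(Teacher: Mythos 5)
Your proposal is correct and follows essentially the same route as the paper: decompose $W = X \cup_L X'$, use Mayer--Vietoris to get $b_2(W) = b_2(X) + b_2(X')$ with both pieces negative-definite, apply the $b_2$-minimality of $L(p,q)$ to the filling $X$, and then produce the unbounded family by taking continued fractions with all entries large enough to avoid the forbidden configurations (the paper uses entries $\geq 5$, which already suffices, versus your more conservative $\geq 7$). The only difference is that you spell out the verification that $X$ and $X'$ are genuinely negative-definite fillings, which the paper leaves implicit.
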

\begin{proof}
    Suppose such an embedding exists. Then $L(p,q)$ separates $W$ into two smooth negative-definite compact manifolds $X$ and $X'$, where $\partial X = L(p,q)$ and $\partial X' = -L(p,q)$. Moreover, using the Mayer-Vietoris sequence, we have that $b_2(X) + b_2(X') = b_2(W)$. Since $L(p,q)$ is $b_2$-minimal, the following inequalities hold:
    $$b_2(X(p,q)) \leq b_2(X) \leq b_2(W).$$
    The latter part of the theorem follows from the existence of a sequence of $b_2$-minimal lens spaces ${L(p_i, q_i)}_{i \in \mathbb{N}}$ with arbitrarily large $b_2(X(p_i,q_i))$. This can be achieved by selecting $L(p_i, q_i)$ as a lens space where the continued fraction expansion of $p_i/q_i$ consists of integers greater than 4, with length $i$.
\end{proof}

We briefly outline the proof of Theorem~\ref{Thm:AMP2022}. The proof that \ref{it:min_filling} implies \ref{it:combinatorial} is the easier direction. Let us consider its contrapositive. If the plumbing graph associated with $\natural_i X(p_i,q_i)$ contains any of the induced subgraphs listed in \ref{it:combinatorial}, then the spherical generators of $\natural_i X(p_i,q_i)$ can be used to construct a smoothly embedded copy of $X(4,1)$, $X(9,2)$, $X(16,3)$, or $X(64,23)$ in $\natural_i X(p_i,q_i)$. Since the corresponding lens spaces $L(4,1)$, $L(9,2)$, $L(16,3)$, and $L(64,23)$ bound rational homology balls, we may remove a copy of $X(4,1)$, $X(9,2)$, $X(16,3)$, or $X(64,23)$ and replace it with a rational ball to produce a smaller definite filling.

For the converse, Donaldson's Diagonalization Theorem~\cite{Donaldson:1987-1} is again applied. Let $L = \#_i L(p_i, q_i)$ be a connected sum of lens spaces, and let $X$ be a smooth, negative-definite filling of $L$. Using the canonical negative-definite plumbing $\natural_i X(p_i, p_i-q_i)$, we obtain a smooth, closed, negative-definite 4-manifold
$$W := X \cup_{L} \natural_i X(p_i,p_i-q_i).$$
Donaldson's diagonalization Theorem~\cite{Donaldson:1987-1} implies that $W$ has the standard intersection form. The inclusions $X \hookrightarrow W$ and $\natural_i X(p_i, p_i-q_i) \hookrightarrow W$ induce a morphism of integral lattices
$$Q_{X} \oplus Q_{\natural_i X(p_i,p_i-q_i)}\hookrightarrow \langle-1\rangle^n$$
where $n = b_2(X)+b_2(\natural_i X(p_i,p_i-q_i))$. An easy argument using the Mayer-Vietoris exact sequence implies that $Q_X$ is isomorphic to the orthogonal complement of the image of $Q_{\natural_i X(p_i, p_i-q_i)}$ in $\langle-1\rangle^n$. The key part of the proof, which involves intricate combinatorics of integral lattices, is to demonstrate that if $L$ satisfies assumption \ref{it:combinatorial}, then an embedding of $Q_{\natural_i X(p_i, p_i-q_i)}$ into $\langle-1\rangle^n$ exists if and only if $n$ is at least $b_2(X(p_i, q_i)) + b_2(\natural_i X(p_i, p_i-q_i))$. This implies that $b_2(X) \geq b_2(X(p_i, q_i))$. In fact, a stronger result is proved in \cite[Theorem 1.1]{Aceto-McCoy-Park:2022-1}, which states that if $L$ satisfies assumption \ref{it:combinatorial} and there is an embedding of $Q_{\natural_i X(p_i, p_i-q_i)}$ into $\langle-1\rangle^n$, then the embedding is unique up to automorphisms.
 

\subsection{Embedding amphichiral lens spaces in  definite manifolds}
Here, we demonstrate that a specific class of lens spaces, namely amphichiral lens spaces, can all be embedded into a small, smooth, negative-definite 4-manifold—a property not shared by general lens spaces, as illustrated in Corollary~\ref{cor:embeddingminimallensspace}. To be more precise, we construct a smooth, negative-definite filling for each amphichiral lens space, with the first Betti number zero and the second Betti number two. This construction will enable us to prove Theorem~\ref{thm:amhichiralfilling} and Corollary~\ref{cor:amphichiralgenus}. We say that a 3-manifold $Y$ is \emph{amphichiral} if there exists an orientation preserving diffeomorphism between $Y$ and its orientation reversal, i.e.,\ $Y\cong -Y$. Recall that $-L(p,q)\cong L(p,p-q)$ and that $L(p,q)\cong L(p',q')$ if and only if $p=p'$ and $q'\equiv q \bmod{p}$ or $q'\equiv q^{*} \bmod{p}$ where $q^*$ is the unique integer such that $p>q^*>0$ and $qq^*\equiv 1 \bmod{p}$. Since we are assuming that $p$ and $q$ are relatively prime with $p>q>0$, it follows that $L(p,q)$ is amphichiral if and only if $q^2\equiv -1 \bmod{p}$.

Our first goal is to characterize amphichiral lens spaces in terms of their canonical plumbing graph. Given two strings of integers
$(a_1,\dots,a_n)$ and $(b_1,\dots,b_m)$ we consider the following operations
\begin{itemize}
\item $(a_1,\dots,a_n),(b_1,\dots,b_m)\mapsto (a_1+1, a_2, \dots,a_n),(b_1,\dots,b_m,2)$
\item $(a_1,\dots,a_n),(b_1,\dots,b_m)\mapsto (2,a_1,\dots,a_n),(b_1,\dots,b_{m-1},b_m+1)$
\end{itemize}
We say that the pairs $$(a_1+1, a_2, \dots,a_n),(b_1,\dots,b_m,2)\qquad \text{ and } \qquad (2,a_1,\dots,a_n),(b_1,\dots,b_{m-1},b_m+1)$$ are obtained from
$(a_1,\dots,a_n),(b_1,\dots,b_m)$ via \emph{$2$-final expansions} and we call the inverses of these operations \emph{$2$-final contractions}. We use the same terminology for a single string, e.g.\ we say that $(a_1+1, a_2, \dots,a_n,2)$ is obtained from $(a_1,\dots,a_n)$ via $2$-final expansion. A pair of strings of integers are called \emph{complementary} if they are obtained from the pair $(2),(2)$ by a sequence of $2$-final expansions. A string of integers $(a_1,\dots,a_n)$ is called \emph{self-complementary} if the pair $(a_1,\dots,a_n),(a_1,\dots,a_n)$ is complementary. Note that for this definition the order of two strings does not matter since if $(a_1,\dots,a_n),(b_1,\dots,b_m)$ is obtained from $(2),(2)$ via $2$-final expansions, then $(b_1,\dots,b_m),(a_1,\dots,a_n)$ can also be obtained from $(2),(2)$ via $2$-final expansions by reversing the order of 2-final expansions and choosing the other operation for each step. 


In the next lemma, we provide several criteria for two strings of integers to be complementary.

\begin{lem}\label{lem:complementarycriteria}
Suppose $$p/q=[a_1,\dots,a_n]^- \qquad\text{ and }\qquad r/s=[b_1,\dots,b_m]^-$$
with $a_i, b_i\geq 2$, then the following conditions are equivalent. 
\begin{enumerate}[font=\upshape]
\item\label{item:1complementary} $(a_1,\dots,a_n)$, $(b_1,\dots,b_m)$ are complementary.
\item\label{item:2complementary} $(a_n,\dots,a_1)$, $(b_m,\dots,b_1)$ are complementary.
\item\label{item:3complementary} 
$[a_1,\dots,a_n,1,b_1,\dots,b_m]^-=0$.
\item\label{item:4complementary} We have 
\begin{equation}\label{cfidentity}
\frac{q^*}{p}+\frac{s}{r}=1
\end{equation}
where $q^*$ is the unique integer such that $p>q^*>0$ and $qq^*\equiv 1 \bmod p$.
\item\label{item:5complementary} $r=p$ and $s=p-q^*$.
\end{enumerate}
\end{lem}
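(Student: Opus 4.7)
My strategy is to prove that conditions \ref{item:1complementary}, \ref{item:3complementary}, \ref{item:4complementary}, and \ref{item:5complementary} are all equivalent to a single matrix identity, and then deduce \ref{item:1complementary} $\Leftrightarrow$ \ref{item:2complementary} from a transpose symmetry. Encoding negative continued fractions by the matrices $M_a := \bigl(\begin{smallmatrix} a & -1 \\ 1 & 0\end{smallmatrix}\bigr)$, a routine induction identifies
\[
A := M_{a_1}\cdots M_{a_n} = \begin{pmatrix} p & -q^* \\ q & -q^{**}\end{pmatrix}, \qquad B := M_{b_1}\cdots M_{b_m} = \begin{pmatrix} r & -s^* \\ s & -s^{**}\end{pmatrix},
\]
with $qq^* - pq^{**} = 1$ and the analogous equality for $B$. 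The intermediate goal is to show that each of \ref{item:1complementary}, \ref{item:3complementary}, \ref{item:4complementary}, \ref{item:5complementary} is equivalent to the matrix identity
\[
(\ast) \qquad A M_1 B = \begin{pmatrix} 0 & -1 \\ 1 & -1 \end{pmatrix}.
\]
The equivalences with \ref{item:3complementary}, \ref{item:4complementary}, and \ref{item:5complementary} fall to direct calculation: expanding the product gives $(AM_1 B)_{11}=(p-q^*)r-ps$, so this entry vanishes iff $q^*/p + s/r = 1$, which is \ref{item:4complementary}; uniqueness of the lowest-terms representation (using $\gcd(q^*,p)=1$) then upgrades this to \ref{item:5complementary}; and \ref{item:3complementary} coincides with the same vanishing since $[a_1,\ldots,a_n,1,b_1,\ldots,b_m]^-$ is the ratio of the $(1,1)$ and $(2,1)$ entries of $AM_1 B$. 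A short substitution using $qq^* - pq^{**} = 1$ then pins down the remaining entries of $AM_1 B$ to the values in $(\ast)$ under \ref{item:5complementary}.

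For \ref{item:1complementary} $\Rightarrow (\ast)$ I would induct on the number of 2-final expansions. The base $(2),(2)$ gives $M_2 M_1 M_2 = \bigl(\begin{smallmatrix}0 & -1 \\ 1 & -1\end{smallmatrix}\bigr)$, and each inductive step reduces to a two-line matrix computation: the first type of expansion left-multiplies $A$ by $T=\bigl(\begin{smallmatrix}1 & 1 \\ 0 & 1\end{smallmatrix}\bigr)$ (using $T M_{a_1} = M_{a_1+1}$) and right-multiplies $B$ by $M_2$, and one directly verifies $T\bigl(\begin{smallmatrix}0 & -1 \\ 1 & -1\end{smallmatrix}\bigr)M_2 = \bigl(\begin{smallmatrix}0 & -1 \\ 1 & -1\end{smallmatrix}\bigr)$; the second type left-multiplies $A$ by $M_2$ and right-multiplies $B$ by $S=\bigl(\begin{smallmatrix}1 & 0 \\ -1 & 1\end{smallmatrix}\bigr)$ (using $M_{b_m} S = M_{b_m+1}$), and is handled analogously. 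The main obstacle will be the converse $(\ast) \Rightarrow$ \ref{item:1complementary}, which I prove by induction on $n+m$. The base $n=m=1$ reads off $(a_1,b_1)=(2,2)$ directly from $(\ast)$. For $n+m>2$ the decisive combinatorial observation is that exactly one of $a_1=2$ and $b_m=2$ holds: one has $a_1=\lceil p/q\rceil$, so $a_1=2 \Leftrightarrow q>p/2$, and the standard reversal identity $r/s^*=[b_m,\ldots,b_1]^-$ gives $b_m=\lceil r/s^*\rceil$, which under \ref{item:5complementary} (i.e.\ $r=p$ and $s^*=p-q$) becomes $b_m=2 \Leftrightarrow q<p/2$; the borderline $q=p/2$ forces $(p,q)=(2,1)$ by coprimality, returning to the base case. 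The corresponding 2-final contraction is then available; invariance of $(\ast)$ under contraction is immediate, being the inverse of the expansion computation above, so the inductive hypothesis delivers complementarity of the contracted pair, and reapplying the inverse expansion recovers the original.

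Finally \ref{item:1complementary} $\Leftrightarrow$ \ref{item:2complementary} follows from the involution identity $K M_a^T K = M_a$ with $K = \bigl(\begin{smallmatrix}1 & 0 \\ 0 & -1\end{smallmatrix}\bigr)$: transposing $(\ast)$ and conjugating by $K$ delivers exactly $(\ast)$ for the pair $(b_m,\ldots,b_1),(a_n,\ldots,a_1)$, which by the stated order-independence of the definition of complementarity is equivalent to $(a_n,\ldots,a_1),(b_m,\ldots,b_1)$ being complementary, giving \ref{item:2complementary}.
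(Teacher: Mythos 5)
Your proposal is correct, and it takes a genuinely different route from the paper. The paper proves the equivalences as a cycle: \eqref{item:1complementary}$\Rightarrow$\eqref{item:2complementary} by induction on the number of $2$-final expansions, \eqref{item:2complementary}$\Rightarrow$\eqref{item:3complementary} via Neumann's plumbing calculus (blow-ups of edges reducing to the graph of $(2,1,2)$, then two blow-downs), \eqref{item:3complementary}$\Rightarrow$\eqref{item:4complementary} by citing a result of Aceto on continued fractions of rooted plumbing graphs, and \eqref{item:5complementary}$\Rightarrow$\eqref{item:1complementary} via the Riemenschneider point rule together with an explicit induction on the block structure $\left[[2]^{c_0},d_1,\dots,d_k,[2]^{c_k}\right]^-$. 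You instead route everything through the single matrix identity $AM_1B=\bigl(\begin{smallmatrix}0&-1\\1&-1\end{smallmatrix}\bigr)$, which buys a self-contained argument with no appeal to plumbing calculus, to the external reference, or to Riemenschneider's rule, and it packages the reversal symmetry \eqref{item:1complementary}$\Leftrightarrow$\eqref{item:2complementary} as conjugation by $K=\mathrm{diag}(1,-1)$ rather than a separate induction. I checked the key computations: the invariance of $(\ast)$ under both expansion types, the identity $(AM_1B)_{11}=(p-q^*)r-ps$, and the determination of the $(2,2)$ entry under \eqref{item:5complementary} (which requires first deriving $s^*=p-q$ and $s^{**}=p-q-q^*+q^{**}$ before the substitution using $qq^*-pq^{**}=1$ yields $-1$; you should spell this out). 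The dichotomy ``exactly one of $a_1=2$, $b_m=2$'' is correct and is doing real work in the converse direction: it guarantees not only that a $2$-final contraction is available but also that the contracted strings still have all entries $\geq 2$ (and, away from the base case, remain nonempty), so the inductive hypothesis applies; it would be worth making that role explicit. The only point at the same level of informality as the paper itself is the identification of $[a_1,\dots,a_n,1,b_1,\dots,b_m]^-=0$ with the vanishing of $(AM_1B)_{11}$, which should be read as a statement about the matrix recursion since the continued fraction with a ``$1$'' in the middle may pass through undefined intermediate values.
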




\begin{proof}
For \eqref{item:1complementary} implies \eqref{item:2complementary}, we use induction on the number of 2-final contractions needed to reduce a given pair to $(2),(2)$. If the given pair of string is $(2),(2)$ then the statement is obvious. Now suppose the statement is true for all pairs of string obtained with at most $N$ 2-final expansions, with $N>0$. Choose a pair of strings obtained from $(2),(2)$ via $N+1$ 2-final expansions. We may write this pair as 
$$
(a_1+1,a_2, \dots,a_n),(b_1,\dots,b_m,2)
$$
with the proof in the other case being identical. Since $(a_1,\dots,a_n)$ and $(b_1,\dots,b_m)$
is obtained from $(2),(2)$ via $N$ 2-final expansions, we see that, applying the inductive hypothesis, the pair
$$
(a_n,\dots a_2,a_1+1),(2,b_m,\dots,b_1)
$$ 
are complementary, which completes the proof.

Next we show that \eqref{item:2complementary} implies \eqref{item:3complementary}. Suppose the strings $(a_n,\dots,a_1)$ and $(b_m,\dots,b_1)$ are complementary. Then, in terms of Neumann's plumbing calculus~\cite{Neumann}, the linear plumbing graph corresponding to the string 
$$(a_1,\dots,a_n,1,b_1,\dots,b_m)$$ is obtained from the one corresponding to 
$(2,1,2)$ via a series of blow-ups of edges. In particular, the underlying plumbed 3-manifold does not change. Now note that the plumbing graph coming from $(2,1,2)$
can be blown down twice to a 0-weighted single vertex. 

Regarding \eqref{item:3complementary} implies \eqref{item:4complementary}, this is essentially the implication $(2)$ implies $(3)$ in~\cite[Proposition~2.15]{Aceto:2020-1}. Following the definition 
of the continued fraction of a rooted plumbing graph given in~\cite{Aceto:2020-1} one obtains~\eqref{cfidentity}. 

Next, we verify that \eqref{item:4complementary} implies \eqref{item:5complementary}. From \eqref{item:4complementary}, we have $$\frac{s}{r}=\frac{p-q^*}{p}.$$ Since we are assuming coprime integers $p$ and $q$ satisfy $p>q>0$ and coprime integers $r$ and $s$ satisfy $r>s>0$, the equality given in \eqref{item:5complementary} follows.

Finally, we show that \eqref{item:5complementary} implies \eqref{item:1complementary}. Let $[a]^{m}$ denote the tuple
\[
\underbrace{a, a, \dots,a, a}_{\text{$m$ times}},
\]
with the understanding that $[a]^{0}$ denotes the empty tuple. Then we may write 
$$
\frac{p}{q}=[a_1,\dots,a_n]^-=\left[[2]^{c_0},d_1,[2]^{c_1},\dots,d_k,[2]^{c_k}\right]^-.
$$
Recall that if $[a_1,\dots,a_n]^-=p/q$ then $[a_n,\dots,a_1]^-=p/q^*$. Moreover,
we have $(p-q)^*=p-q^*$. 
Now, since $$\frac{p}{p-q^*} = [b_1,\dots,b_m]^-$$ we have
$$
\frac{p}{p-q}=[b_m,\dots,b_1]^-.
$$

In particular, the string $(b_m,\dots,b_1)$ can be determined from $(a_1,\dots,a_n)$ via the Riemenschneider point rule \cite{Riemenschneider:1974-1}. More precisely, we have
\begin{equation*}\label{eq:Riemanschneider_long}
\frac{p}{p-q}=\begin{cases}
\left[c_0+1\right]^- &\text{if $k=0$}\\
\left[c_0+2, [2]^{d_1-3}, c_1+3, [2]^{d_2-3}, \dots,  [2]^{d_k-3}, c_k + 2\right]^- &\text{if $k\geq 1$}.
\end{cases}
\end{equation*}
Let us look at the case $k>1$ (we leave the case $k=0$ and $k=1$ as an easy exercise). We can write the pair $(b_1,\dots,b_m),(a_1,\dots,a_n)$ as 
$$
(c_k + 2, [2]^{d_k-3}, c_{k-1}+3,[2]^{d_{k-1}-3},\dots,[2]^{d_{1}-3},c_0+2  ), ([2]^{c_0},d_1,[2]^{c_1},\dots,[2]^{c_{k-1}},d_k,[2]^{c_k}).
$$
We can verify, by induction on $k$, that these strings are obtained via 2-final expansions from the pair $(2),(2)$. A sequence of $c_k$ 2-final contractions on the right results in
$$
(2, [2]^{d_k-3}, c_{k-1}+3,[2]^{d_{k-1}-3},\dots,[2]^{d_{1}-3},c_0+2), ([2]^{c_0},d_1, [2]^{c_1},\dots, [2]^{c_{k-1}},d_k).
$$ 
From here, after $d_k-2$ 2-final contractions on the left we obtain
$$
(c_{k-1}+3, [2]^{d_{k-1}-3},\dots, [2]^{d_{1}-3},c_0+2), ([2]^{c_0},d_1,[2]^{c_1},\dots, [2]^{c_{k-1}+1}).
$$ 
Lastly, a 2-final contractions on the right gives
$$
(c_{k-1}+2, [2]^{d_{k-1}-3},\dots, [2]^{d_{1}-3},c_0+2), ([2]^{c_0},d_1, [2]^{c_1},\dots, [2]^{c_{k-1}}).
$$ 
This last pair has the same form as our initial pair and so we may apply the induction hypothesis.
This completes the proof of the lemma.
\end{proof}

\begin{lem}\label{lem:selfcomplementary} If $L(p,q)$ is a lens space
with $p/q=[a_1,\dots,a_n]^-$, then $L(p,q)$ is amphichiral if and only if $(a_1,\dots,a_n)$ is self-complementary.
\end{lem}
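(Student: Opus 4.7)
The plan is to reduce the statement to the equivalence \eqref{item:1complementary} $\Leftrightarrow$ \eqref{item:5complementary} in Lemma~\ref{lem:complementarycriteria}, with both strings taken to be $(a_1,\dots,a_n)$. First I would recall the classification of lens spaces up to orientation-preserving diffeomorphism: $L(p,q)\cong L(p,q')$ iff $q'\equiv q\bmod p$ or $q'\equiv q^*\bmod p$, where $q^*$ is the multiplicative inverse of $q$ modulo $p$ in the range $0<q^*<p$. Combined with $-L(p,q)\cong L(p,p-q)$, this shows that $L(p,q)$ is amphichiral iff $p-q\equiv q \bmod p$ or $p-q\equiv q^*\bmod p$. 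The first case $p-q\equiv q\bmod p$ would force $p\mid 2q$, contradicting $\gcd(p,q)=1$ and $p>q>0$ (unless $p\leq 2$, which is easy to handle separately). So amphichirality is equivalent to $p-q=q^*$, i.e.\ $q+q^*=p$, equivalently $q^2\equiv -1\bmod p$.

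Next I would translate the condition $p-q=q^*$ directly into condition \eqref{item:5complementary} of Lemma~\ref{lem:complementarycriteria} applied to the pair of strings $(a_1,\dots,a_n),(a_1,\dots,a_n)$. Indeed, with the lemma's notation, we set $p/q=[a_1,\dots,a_n]^-$ and $r/s=[a_1,\dots,a_n]^-$, so that $r=p$ and $s=q$. Condition \eqref{item:5complementary} then reads $r=p$ and $s=p-q^*$, which, since $r=p$ is automatic, is equivalent to $q=p-q^*$. By the previous paragraph, this is exactly the amphichirality condition.

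Finally, by the equivalence of conditions \eqref{item:1complementary} and \eqref{item:5complementary} in Lemma~\ref{lem:complementarycriteria}, the pair $(a_1,\dots,a_n),(a_1,\dots,a_n)$ being complementary is equivalent to $q=p-q^*$. By the definition of self-complementary, this is precisely the condition that $(a_1,\dots,a_n)$ is self-complementary, completing the proof.

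The argument is essentially a direct unpacking, so there is no real obstacle; the only subtlety is making sure to dispose of the degenerate case $p-q\equiv q\bmod p$ cleanly, but since $\gcd(p,q)=1$ with $p>q>0$ forces $p\ne 2q$ outside trivial cases, this is immediate.
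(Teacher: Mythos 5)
Your proof is correct and takes essentially the same route as the paper: both arguments reduce the lemma to the equivalence \eqref{item:1complementary}$\Leftrightarrow$\eqref{item:5complementary} of Lemma~\ref{lem:complementarycriteria} applied to the pair $(a_1,\dots,a_n),(a_1,\dots,a_n)$, combined with the classification of lens spaces up to orientation-preserving diffeomorphism, which identifies amphichirality with $q=p-q^*$ (the degenerate case $p-q\equiv q \bmod p$ occurring only for $L(2,1)$, whose string $(2)$ is self-complementary). The paper phrases the converse via uniqueness of the canonical string up to reversal rather than the congruence conditions directly, but this is the same fact and your handling of the $p=2$ case is adequate.
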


\begin{proof}
If $(a_1,\dots,a_n)$ is self-complementary, then by \eqref{item:5complementary} in Lemma~\ref{lem:complementarycriteria}, we obtain $q=p-q^*$ and therefore we have
$$
-L(p,q)\cong L(p,p-q) \cong L(p,p-q^*)\cong L(p,q).
$$
Conversely, suppose $L(p,q)$ is amphichiral. Since the oriented diffeomorphism type of $L(p,q)$ determines the string $(a_1,\dots,a_n)$ up to an overall reversal of order, we see that either $$[a_1,\dots,a_n]^-=\frac{p}{p-q} \qquad\text{ or }\qquad [a_1,\dots,a_n]^-=\frac{p}{p-q^*}.$$ If the first equality holds, then $p=2$ and $q=1$. In particular, the corresponding string is $(2)$ which is self complementary. If the second equality holds, the conclusion follows from \eqref{item:5complementary} implies \eqref{item:1complementary} in Lemma \ref{lem:complementarycriteria}.\end{proof}

\begin{lem}\label{amphichiralstring}
A lens space $L(p,q)$ is amphichiral if and only if one of the following holds
\begin{itemize}
\item $L(p,q)\cong L(2,1)$
\item the string associated with $L(p,q)$ is either of the form 
$$
(a_1,\dots,a_n,b_1+1,b_2,\dots,b_m)
\qquad \text{ or }\qquad
(a_1,\dots,a_n+1,b_1,b_2,\dots,b_m)
$$ 
for some pair of complementary strings $(a_1,\dots,a_n)$, $(b_1,\dots,b_m)$.
\end{itemize} 
\end{lem}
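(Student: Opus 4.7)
The plan is to invoke Lemma~\ref{lem:selfcomplementary} to reduce the statement to the following assertion about strings: a string $(c_1, \dots, c_N)$ of integers all $\geq 2$ is self-complementary if and only if it equals $(2)$ (corresponding to $L(p,q) \cong L(2,1)$) or has one of the two stated forms for some complementary pair $(a_1,\dots,a_n)$, $(b_1,\dots,b_m)$. After this reduction, the two directions of the characterization need to be handled separately.

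For the reverse implication, I would use criterion (3) of Lemma~\ref{lem:complementarycriteria}: $(c), (c)$ is complementary if and only if $[c, 1, c]^- = 0$. Given that $(a), (b)$ is complementary, so that $[a, 1, b]^- = 0$, the aim is to verify $[c, 1, c]^- = 0$ for $c$ in each of the two forms. The essential tool is Neumann's blow-down identity $[\dots, x, 1, y, \dots]^- = [\dots, x-1, y-1, \dots]^-$ (used implicitly via Neumann's plumbing calculus in the proof of Lemma~\ref{lem:complementarycriteria}), which I would apply at the central $1$ and at further positions inside $[c, 1, c]^-$ to iteratively reduce the continued fraction to a form involving $[a, 1, b]^- = 0$.

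For the forward implication, I would argue by strong induction on the length $N$. The base case $N = 1$ forces $c = (2)$. For $N \geq 2$, the pair $(c), (c)$ is obtained from $((2),(2))$ via at least one $2$-final expansion, and I would trace the expansions backwards from $(c, c)$ toward $((2),(2))$. Since the terminal pair is itself a pair of the form $((d),(d))$, the trace must pass through some pair of the form $((d),(d))$ with $|d| < N$. By the induction hypothesis, $d$ equals $(2)$ or has one of the two stated forms. I would then use the forward chain of $2$-final expansions from $((d),(d))$ back up to $((c),(c))$ to lift the decomposition of $d$ to a Form 1 or Form 2 decomposition of $c$, reading off the new complementary pair $(a), (b)$ step by step along the chain. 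Case analysis of the last two expansions (whether each is of the first type, forcing $c_N = 2$ and $c_1 \geq 3$, or of the second type, giving the symmetric conditions $c_1 = 2$ and $c_N \geq 3$) guides how the entries of $c$ relate to those of $(a), (b)$.

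The hard part is the forward direction. Lifting the decomposition of $d$ to one of $c$ requires analyzing how each $2$-final expansion along the chain transforms both the string being built up and the underlying candidate complementary pair; in particular, I would need to verify that the pair $(a), (b)$ produced by the lift remains complementary via criterion (5) of Lemma~\ref{lem:complementarycriteria} while the form of $c$ stays in Form 1 or Form 2. A secondary subtlety is that the decomposition is generally non-unique (as already visible from the examples $(2,3)$ and $(3,2)$ arising from $(a)=(b)=(2)$), so I only need to exhibit one such decomposition without committing to canonical choices prematurely.
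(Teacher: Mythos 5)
Your reduction via Lemma~\ref{lem:selfcomplementary} matches the paper, but the forward implication as you describe it has a genuine gap. An arbitrary chain of $2$-final expansions from $(2),(2)$ to $(c),(c)$ does \emph{not} in general pass through an intermediate symmetric pair $(d),(d)$ other than $(2),(2)$ itself: each expansion alters the two coordinates differently, so the intermediate pairs are typically asymmetric. Already $(2),(2)\mapsto (3),(2,2)\mapsto (2,3),(2,3)$ shows the pair one step from the end is asymmetric, and if you insist that the pair two steps before $(c),(c)$ be symmetric, a short computation forces $c$ to have the very special form $(3,\dots,3,2)$ or $(2,3,\dots,3)$, which misses most self-complementary strings (e.g.\ $(2,2,4)$). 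So your claim that ``the trace must pass through some $((d),(d))$ with $|d|<N$'' is satisfied only by $d=(2)$, which renders the induction hypothesis vacuous; the subsequent ``lift along the chain'' then has to carry a Form~1/Form~2 decomposition through pairs neither of whose coordinates equals the string you are decomposing, and no invariant surviving those asymmetric steps is specified. Your case analysis of ``the last two expansions'' presupposes exactly the symmetry that fails.

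What is actually needed — and what the paper proves — is a statement about a \emph{chosen} contraction rather than the given chain: since the final expansion forces $c_1=2$ or $c_k=2$, one may perform the corresponding single-string $2$-final contraction on $(c)$, and the key lemma is that the resulting string of length $k-1$ is again self-complementary. The paper establishes this by contracting the pair $(c),(c)$ once, invoking the reversal invariance \eqref{item:2complementary} of Lemma~\ref{lem:complementarycriteria}, contracting once more to land on a symmetric pair, and reversing again; with this in hand the induction closes immediately, since a single-string expansion of a string obtained from $(2,3)$ or $(3,2)$ by expansions is again such a string. This step is absent from your proposal and cannot be replaced by tracing the given expansion sequence. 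For the backward implication your route differs from the paper's (which observes that both concatenated forms are single-string expansions of the self-complementary strings $(2,3)$ and $(3,2)$, and that such expansions preserve self-complementarity): your blow-down computation of $[c,1,c]^-$ is plausible but is only a sketch — you would need the fact that $a_1=2$ or $b_m=2$ to keep producing $1$'s to blow down, an induction on the number of expansions defining $(a),(b)$, and some care with ill-defined intermediate continued fractions, which is why the paper phrases such manipulations via Neumann's plumbing calculus.
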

\begin{proof}
It can be easily verified that $L(2,1)$ is the unique amphichiral lens space with length one associated string. Hence, from now on we assume that the string of $L(p,q)$ has length greater than one. 


Suppose the string associated with $L(p,q)$ is either 
$$
(a_1,\dots,a_n,b_1+1,b_2,\dots,b_m)
\qquad \text{ or }\qquad
(a_1,\dots,a_n+1,b_1,b_2,\dots,b_m),
$$ 
where $(a_1,\dots,a_n)$, $(b_1,\dots,b_m)$ are complementary. Note that $(a_1,\dots,a_n,b_1+1,b_2,\dots,b_m)$ is obtained from $(2,3)$ via 2-final expansions and, similarly, $(a_1,\dots,a_n+1,b_1,b_2,\dots,b_m)$ is obtained from $(3,2)$. Moreover, since $(2,3)$ and $(3,2)$ are self-complementary, it follows that both cases of strings are also self-complementary. Hence, we conclude that $L(p,q)$ is amphichiral by Lemma~\ref{lem:selfcomplementary}.



Suppose $L(p,q)$ is amphichiral, and let $(c_1,\dots,c_k)$ be the string associated to $L(p,q)$. By Lemma~\ref{lem:selfcomplementary}, we have that $(c_1,\dots,c_k)$ is self-complementary. Hence it is enough to show that for each self-complementary chain $(c_1,\dots,c_k)$ with $k\geq 2$, the chain is obtained from either $(2,3)$ or $(3,2)$ via $2$-final expansions. We prove this by induction on the length of the string $k$. For the base case, it is straightforward to check that $(c_1,c_2)$ being self-complementary implies that $(c_1,c_2)$ is either $(2,3)$ or $(3,2)$. Now, suppose the thesis holds for any length smaller that $k$. Since we are assuming that $(c_1,\dots,c_k)$ is self-complementary, we may perform a $2$-final contraction on $(c_1,\dots,c_k)$ to obtain either $$(c_2,\dots, c_{k-1},c_k-1) \qquad\text{ or }\qquad (c_1-1,c_2,\dots,c_{k-1}).$$ Suppose we are in the first case (the other one being analogous) and claim that it is again self-complementary. Note that in this case $c_1=2$ and we have that the strings
$$
(c_2,\dots,c_k),(c_1,c_2, \dots,c_{k-1},c_k-1)
$$
are complementary since it is obtained by a $2$-final contraction from $(c_1,\dots,c_k),(c_1, \dots,c_k)$. It follows from condition~\eqref{item:2complementary} in Lemma~\ref{lem:complementarycriteria} that the strings
$$
(c_k,\dots,c_2),(c_k-1,c_{k-1},\dots,c_2,c_1)
$$
are complementary. Performing a $2$-final contraction on this pair we obtain the pair of complementary strings
$$
(c_k-1,c_{k-1},\dots,c_2),(c_k-1,c_{k-1},\dots,c_2).
$$
Again, by condition~\eqref{item:2complementary} in Lemma \ref{lem:complementarycriteria}, we conclude that $(c_2,\dots,c_{k-1},c_k-1)$ is self-complementary, and the claim follows from the inductive hypothesis.\end{proof}

We are ready to prove Theorem~\ref{thm:amhichiralfilling}, whose statement we recall.

\begin{thm}\label{thm:bodyamhichiralfilling}
Every amphichiral lens space admits a smooth, negative-definite filling $W$ with $b_1(W) = 0$ and $b_2(W) = 2$. \end{thm}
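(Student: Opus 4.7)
My plan is to construct $W$ explicitly by leveraging the structural characterization of amphichiral lens spaces provided by Lemma~\ref{amphichiralstring}.

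The case $L(2,1)$ is handled directly: set $W = X(2,1) \natural \overline{\mathbb{CP}^2}_0$, where $\overline{\mathbb{CP}^2}_0 := \overline{\mathbb{CP}^2} \setminus \mathring B^4$. Then $\partial W \cong L(2,1) \# S^3 \cong L(2,1)$ and $W$ is visibly negative-definite with $b_1(W) = 0$ and $b_2(W) = 1 + 1 = 2$.

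For $p > 2$, Lemma~\ref{amphichiralstring} produces a complementary pair of strings $(a_1,\dots,a_n), (b_1,\dots,b_m)$ such that the string of $L(p,q)$ is $(a_1,\dots,a_n,b_1+1,b_2,\dots,b_m)$ or $(a_1,\dots,a_n+1,b_1,\dots,b_m)$. I will treat the first form; the other is symmetric. The goal is to exhibit a handle decomposition of $W$ consisting of a single $0$-handle and exactly two $2$-handles attached along a $2$-component link $L \subset S^3$, with framings and linking number chosen so that the linking matrix is negative definite with determinant $p$ and the surgery boundary is $L(p,q)$. The bridge between the combinatorial data of the complementary pair and this $2$-handle picture is the vanishing identity $[a_1,\dots,a_n,1,b_1,\dots,b_m]^- = 0$ of Lemma~\ref{lem:complementarycriteria}\eqref{item:3complementary}: inserting a $(-1)$-framed unknot between the two halves of the canonical Hopf chain for $L(p,q)$ produces a diagram in which the two halves can be successively slam-dunked into the central $-(b_1+1)$-framed component, and the vanishing identity is what guarantees that this collapse terminates with integer (not merely rational) framings.

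I would carry out this reduction by induction on the number of $2$-final expansions needed to obtain $(a_1,\dots,a_n), (b_1,\dots,b_m)$ from $(2),(2)$. The base case $(2),(2)$ corresponds to $L(5,2)$, whose canonical plumbing $X(5,2)$ already has $b_2 = 2$ and supplies the filling tautologically. In the inductive step, each $2$-final expansion of the complementary pair is matched to a specific Kirby move (a Rolfsen twist or slam-dunk) on the $2$-component link that preserves both negative-definiteness and the integer framings. The determinant of the final $2 \times 2$ linking matrix can then be identified with $p$ via the continued fraction identity of Lemma~\ref{lem:complementarycriteria}\eqref{item:5complementary}, which encodes exactly the relation $r = p$, $s = p - q^*$ underlying the complementary structure.

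The main obstacle will be executing the induction cleanly: verifying, at each step, that the slam-dunk/Rolfsen move corresponding to a $2$-final expansion simultaneously keeps the framings integral, maintains negative-definiteness of the intersection form, and preserves the diffeomorphism type of the surgery boundary. The integrality condition is the most delicate, since naive slam-dunks generically produce rational framings; it is precisely the vanishing identity from Lemma~\ref{lem:complementarycriteria}\eqref{item:3complementary} (together with its equivalent reformulations in that lemma) that controls these denominators and forces them to cancel at the level of the $2$-component picture.
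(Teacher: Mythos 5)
Your treatment of $L(2,1)$ is fine, but the main construction has a structural flaw that I do not think can be repaired along the lines you describe. Your induction matches each $2$-final expansion of the complementary pair with a Kirby move (slam-dunk or Rolfsen twist) on a $2$-component link; but Kirby moves preserve the diffeomorphism type of the surgery boundary, whereas a $2$-final expansion changes the lens space entirely (it changes $p$). A sequence of Kirby moves starting from a diagram of $L(5,2)$ can only ever produce diagrams of $L(5,2)$, so the induction cannot reach a general amphichiral $L(p,q)$. To change the boundary you must attach handles, i.e.\ build a cobordism, and that is exactly what the paper does: it constructs a cobordism from $L(2,1)$ to $L(p,q)$ out of one $1$-handle and two $2$-handles (framed $0$ and $+1$), using condition~\eqref{item:3complementary} of Lemma~\ref{lem:complementarycriteria} only to recognize an intermediate boundary as $L(2,1)\# S^1\times S^2$, and then caps off with $\pm X(2,1)$. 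The resulting filling has $b_1=0$ but contains a $1$-handle and is not a $2$-handlebody; your target object --- a negative-definite filling with $H_1=0$ built from two $2$-handles on a $2$-component link --- is a strictly stronger object whose existence is neither established by your argument nor needed for the theorem.

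The integrality claim is also false as stated. Slam-dunking a tail $(a_1,\dots,a_n)$ of the canonical chain inward produces the rational framing $-[a_n,\dots,a_1]^-$ on the surviving component, and since every entry is at least $2$ this is an integer only when $n=1$. Already for $L(10,3)$, whose string is $(4,2,2)=(3+1,2,2)$ with $(3),(2,2)$ complementary, collapsing the $(2,2)$ tail yields the framing $-[2,2]^-=-3/2$, and collapsing from the other end yields $-[2,4]^-=-7/4$; no choice gives a $2$-component integral picture. Condition~\eqref{item:3complementary} of Lemma~\ref{lem:complementarycriteria} is equivalent, via condition~\eqref{item:4complementary}, to the relation $q^*/p+s/r=1$ between the two tails; it constrains the two rational numbers relative to each other but does not make either of them an integer, so it cannot control the denominators in the way you assert. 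Finally, ``inserting a $(-1)$-framed unknot between the two halves'' changes the $3$-manifold unless it is performed as a blow-up of an edge, in which case it also changes the two adjacent framings; as written this step does not preserve $L(p,q)$.
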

\begin{proof}
Let $L(p,q)$ be an amphichiral lens space. If $L(p,q) \cong L(2,1)$, then it is clear that it admits a smooth, negative-definite filling $X$ with $b_1(X)=0$ and $b_2(X)=2$ (in fact it bounds $b_2(X)=1$). Therefore, we assume that $L(p,q) \ncong L(2,1)$.  We will construct a cobordism $W$ such that $\partial W=-L(2,1)\sqcup L(p,q)$, $b_1(W)=0$, and $b_2(W)=1$. Given such $W$ we obtain a smooth negative-definite filling $X$ by capping $W$ off one boundary component with $X(2,1)$,  the canonical negative-definite plumbing bounded by $L(2,1)$, if $W$ is negative-definite and with  $-X(2,1)$ if $W$ is positive-definite.

	
	Assume that the string associated with $L(p,q)$ is of the form $(a_1,\dots,a_n+1,b_1,b_2,\dots,b_m)$ where strings $(a_1,\dots,a_n)$, $(b_1,\dots,b_m)$ are complementary which is one of the possibility in Lemma~\ref{amphichiralstring}; it will be clear from the proof that the other case stated in Lemma~\ref{amphichiralstring} can be treated in the same way.
 
 We start by adding a 1-handle to $L(2,1)\times [0,1]$ as shown in Figure~\ref{fig:first}. Before adding 2-handles, we redraw the boundary (i.e., $L(2,1)\# S^1\times S^2$) as shown in Figure~\ref{fig:second}. The fact that this surgery description corresponds to $L(2,1)\# S^1\times S^2$ follows from condition \eqref{item:3complementary} in Lemma~\ref{lem:complementarycriteria}. Now we attach two 2-handles with framings $0$ and $+1$ respectively as shown in Figure~\ref{fig:third}. The result is our cobordism $W$. By performing a simple handle, we can identify the positive boundary of $W$ with $L(p,q)$, which is depicted in Figure~\ref{fig:fourth}.

	In order to conclude that $b_1(W)=0$ and $b_2(W)=1$, it is enough to verify that at least  one of the two unknots representing the 2-handles are linked nontrivially with the 1-handle. This follows from the fact that the positive boundary of $W$ is a rational homology sphere.
\end{proof}



\begin{figure}\label{fig:cobordism}
\centering
\begin{subfigure}[b]{\textwidth}
	\centering
    \includegraphics[width=0.7\textwidth]{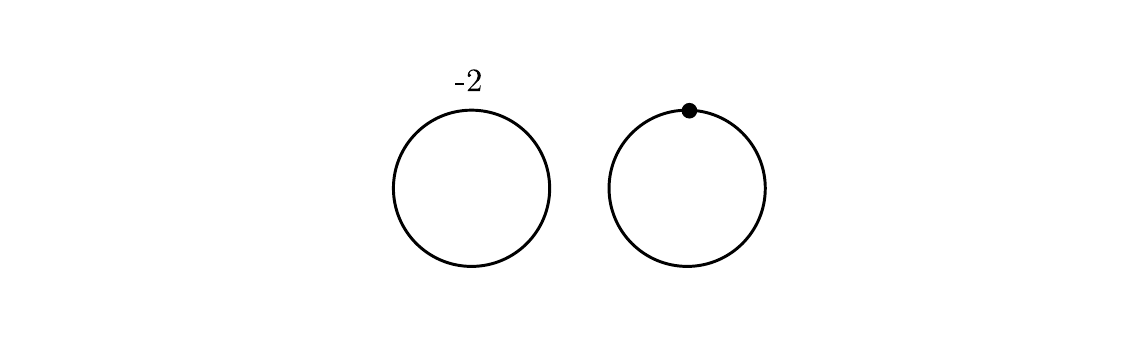}
    \caption{Attaching a 1-handle to $L(2,1)\times I$.}
    \label{fig:first}
\end{subfigure}\\
\vspace{0.5cm}
\begin{subfigure}{0.7\textwidth}
    \includegraphics[width=\textwidth]{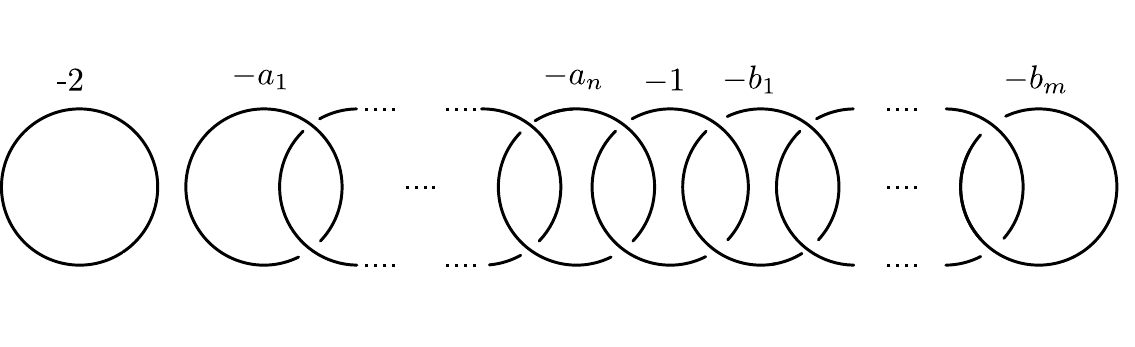}
    \caption{A different surgery diagram for $L(2,1)\# S^1\times S^2$.}
    \label{fig:second}
\end{subfigure}\\
\vspace{0.8cm}
\begin{subfigure}{0.7\textwidth}
    \includegraphics[width=\textwidth]{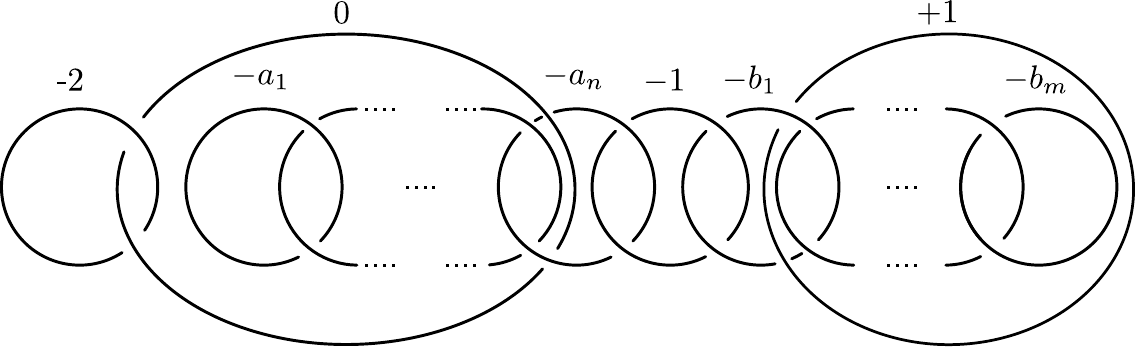}
    \vspace{0.2cm}
    \caption{Two 2-handle attachments.}
    \label{fig:third}
\end{subfigure}\\
\vspace{0.5cm}
\begin{subfigure}{\textwidth}
	\centering
    \includegraphics[width=0.7\textwidth]{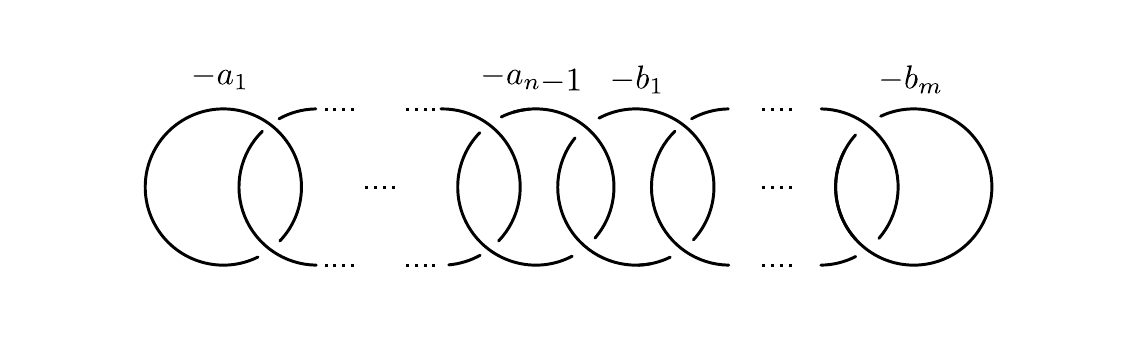}
    \caption{A different surgery diagram for $L(p,q)$.}
    \label{fig:fourth}
\end{subfigure}        
\caption{A cobordism between $L(2,1)$ and the amphichiral lens space $L(p,q)$ with associated string $(a_1,\dots,a_{n-1},a_{n}+1,b_1,\dots,b_{m})$.}
\label{fig:figures}
\end{figure}

We restate and prove Corollary~\ref{cor:amphichiralgenus}.
\begin{cor}\label{cor:amphichiralgenusbody}
The smooth 4-dimensional genus of an amphichiral 2-bridge knot is at most one.
\end{cor}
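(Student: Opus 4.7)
The plan is to revisit the explicit cobordism construction in the proof of Theorem~\ref{thm:bodyamhichiralfilling} and show that it can be realized equivariantly on the double branched cover level, producing a genus-one surface in $B^4$ bounded by the amphichiral 2-bridge knot $K = K(p,q)$. Once such a surface is exhibited, the bound $g_4(K) \leq 1$ is immediate.

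First, I would record the foundational correspondences. Since $K$ is a knot (so $p$ is odd), $\Sigma_2(K) \cong L(p,q)$ is an amphichiral lens space. The lens space $L(2,1)$ is the double branched cover of $S^3$ along the Hopf link $K(2,1)$, and the canonical plumbing $X(2,1)$ is the double branched cover of $B^4$ along a pushed-in Hopf band $A \subset B^4$, a genus-zero Seifert surface for the Hopf link with $\chi(A) = 0$. Thus the ``base'' filling piece already corresponds to a surface in $B^4$ via the double branched cover.

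Next, I would argue that each handle attachment used to construct the cobordism $W$ from $L(2,1)$ to $L(p,q)$ in the proof of Theorem~\ref{thm:bodyamhichiralfilling} (one $1$-handle and two $2$-handles) can be performed equivariantly under the covering involution. The quotient of the cobordism by this involution is a cobordism of pairs $(S^3 \times [0,1], F)$, where $F$ is a surface cobordism from the Hopf link to $K$, with the handle attachments corresponding to local moves on the branch surface (bands and isotopies). Concatenating with $(B^4, A)$ along the Hopf link end yields a pair $(B^4, F')$ whose double branched cover recovers the filling $W'$ of $L(p,q)$ with $b_2(W') = 2$ produced in Theorem~\ref{thm:bodyamhichiralfilling}.

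The topology of $F'$ is then forced by an Euler characteristic count. Using the standard formula for double branched covers,
\[
\chi(W') = 2\chi(B^4) - \chi(F') = 2 - \chi(F'),
\]
together with $\chi(W') = 1 + b_2(W') = 3$, we obtain $\chi(F') = -1$. Since $F'$ is connected with the single boundary component $K$, it must be an orientable surface of genus one, and hence $g_4(K) \leq 1$.

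The main obstacle is the equivariance claim itself: one must verify that the $1$-handle and the two $2$-handles of the cobordism construction admit symmetric realizations, and that the resulting branch surface $F$ is indeed as described. The secondary technicality is the sign convention: when the cobordism $W$ from Theorem~\ref{thm:bodyamhichiralfilling} is positive-definite (so that one caps with $-X(2,1)$ rather than $X(2,1)$), we use amphichirality to reduce to the negative-definite case, since $K \cong -K$ allows the mirror construction to supply a genus-one surface for $K$ in either situation.
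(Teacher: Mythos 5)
Your proposal is correct and follows essentially the same route as the paper: realize the cobordism of Theorem~\ref{thm:bodyamhichiralfilling} as the double cover of $S^3\times[0,1]$ branched along a surface cobordism from the Hopf link to $K(p,q)$ (using that the handle attachments are equivariant under the covering involution), cap off with an annulus in $B^4$ bounded by the Hopf link, and conclude the resulting surface has genus one. The only cosmetic difference is that you extract $\chi(F')=-1$ from the branched-cover Euler characteristic formula, whereas the paper reads it off directly from the Morse data (one minimum, two saddles, no maxima); both give the same answer.
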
	

\begin{proof}Let $L(p,q)$ be an amphichiral lens space and 
let $K(p,q)$ be the unique amphichiral 2-bridge knot such that its double-branched cover is  $L(p,q)$~\cite{Hodgson-Rubinstein:1985-1}. In the previous proof, we constructed a cobordism $W$ with $\partial W=-L(2,1)\sqcup L(p,q)$, $b_1(W)=0$, and $b_2(W)=1$. The cobordism can be obtained as the double cover of $S^3\times [0,1]$ branched along a properly embedded orientable surface $\Sigma$ such that 
\begin{itemize}
\item $\partial\Sigma=-K(2,1)\sqcup K(p,q)$ and
\item the projection on the time coordinate restricts to a Morse function for $\Sigma$ with one local minima, two saddle points and no local maxima.
\end{itemize} 
Here, the knot $-K(2,1)$ is the reverse of the mirror image of $K(2,1)$. This is possible since the attachment of the 2-handles was done equivariantly under the obvious involution shown in Figure~\ref{fig:third}. The double-branched cover of the local minima corresponds to the attachment of the 1-handle, and the double-branched cover of the two saddle points corresponds to the two 2-handle attachments (see, e.g., \cite[Chapter 6.3]{Gompf-Stipsicz:1999-1}). Note that we may choose the orientation of the Hopf link so that this cobordism is orientable.

With the orientation of the Hopf link that coincides with the orientation of $\Sigma$, we find $A \subset B^4$, a smoothly and properly embedded annulus with boundary the Hopf link. The gluing 
$$
(B^4,A)\cup_{(S^3\times\{0\},K(2,1))}(S^3\times [0,1],\Sigma)
$$   
realizes an orientable surface smoothly and properly embedded in $B^4$ with genus one and with boundary the knot $K(p,q)$.\end{proof}

\bibliographystyle{alpha}
\def\MR#1{}
\bibliography{bib}

\newpage
\end{document}